\newtheorem{theorem}{Theorem}[section]
\newtheorem{cor}{Corollary}[section]
\newtheorem{lemma}{Lemma}[section]
\newcommand{\cov}{\mathop{\rm cov}\nolimits}
\newcommand{\sgn}{\mathop{\rm sign}\nolimits}
\newcommand{\nl}{\mathop{\rm NL}\nolimits}
\newcommand{\anl}{\mathop{\rm ANL}\nolimits}
\DeclareMathOperator{\dg2}{deg_2}
\title {Around Sperner's lemma}
\author {Oleg R. Musin\thanks{This research is partially supported by NSF grant DMS - 1101688.}}
\begin{document}

	\ifpdf \DeclareGraphicsExtensions{.pdf, .jpg, .tif, .mps} \else
	\DeclareGraphicsExtensions{.eps, .jpg, .mps} \fi	
	
\date{}
\maketitle

\begin{abstract} We consider a generalization of the classic Sperner lemma.  This lemma states that every Sperner coloring of a triangulation of a simplex contains a fully colored simplex. We found a weaker assumption than Sperner's coloring. It is also shown that the main theorem implies Tucker's lemma and some other theorems.  
\end{abstract}

\medskip

\noindent {\bf Keywords:} Sperner's lemma, Tucker's lemma, degree of mapping

\section{Introduction}

Throughout this paper the symbol ${\mathbb R}^d$ denotes the Euclidean space of dimension $d$.  We denote by  ${\mathbb B}^d$ the $d$-dimensional ball and by  ${\mathbb S}^d$ the $d$-dimensional sphere. If we consider ${\mathbb S}^d$ as the set of unit vectors $x$ in ${\mathbb R}^{d+1}$, then points $x$ and $-x$ are called {\it antipodal} and the symmetry given by the mapping  
 $x \to -x$ is called the {\it antipodality} on  ${\mathbb S}^d$. 

\subsection{Sperner's lemma}

{\it Sperner's lemma} is   a statement about labellings (colorings) of triangulated simplices ($d$-balls). It is a discrete analog of the Brouwer fixed point theorem.

\medskip

\begin{figure}[h]
\begin{center}
  \includegraphics[clip]{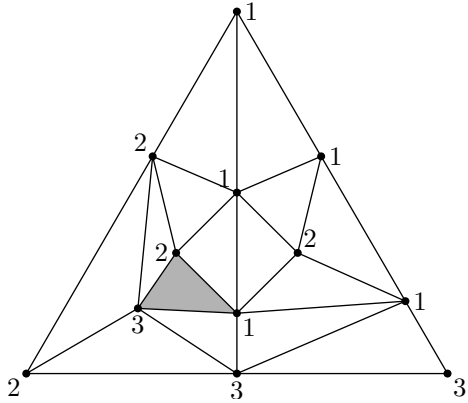}
\end{center}
\caption{A 2-dimensional illustration of Sperner's lemma}
\end{figure}

\medskip

Let $S$ be a $d$-dimensional simplex  with vertices $v_1,\ldots,v_{d+1}$. Let $T$ be a triangulation of $S$. Suppose that each vertex of $T$ is assigned a unique label from the set $\{1,2,\ldots,d+1\}$. A labelling $L$ is called {\it Sperner's} if  the vertices are labelled in such a way that a vertex   of $T$ belonging to the interior of a face $F$ of $S$ can only be labelled by $k$ if $v_k$ is on $S$.   


\begin{theorem}{\bf (Sperner's lemma \cite{Sperner})} 
	Every Sperner labelling of a triangulation of a $d$-dimensional simplex contains a cell labelled with a complete set of labels: $\{1,2,\ldots, d+1\}$.
\end{theorem}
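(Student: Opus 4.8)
The plan is to prove Sperner's lemma by the classical parity (door-counting) argument, which shows not merely that a fully-labelled cell exists but that the number of such cells is odd. I would proceed by induction on the dimension $d$, the base case $d=0$ being trivial and the inductive engine being a careful count of certain ``doors'' on the boundary.

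First I would set up the counting device. Call a $(d-1)$-face of a cell of $T$ a \emph{door} if its $d$ vertices carry exactly the labels $\{1,2,\ldots,d\}$. The key combinatorial observation is that each $d$-cell of $T$ has either zero doors, exactly one door (precisely when the cell is fully labelled with $\{1,\ldots,d+1\}$), or exactly two doors (when the cell uses labels $\{1,\ldots,d\}$ with one label repeated and $d+1$ absent). Thus if I count incidences between cells and their doors, a cell contributes an odd number (namely one) to the count exactly when it is fully labelled, and an even number otherwise.

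Next I would relate this interior count to the boundary. Each interior door lies between two adjacent $d$-cells, so it is counted twice; each boundary door lies on the topological boundary $\partial S$ and belongs to a single cell, so it is counted once. Summing the door-counts over all cells and reducing modulo $2$, the interior doors cancel in pairs, yielding that the number of fully-labelled cells is congruent mod $2$ to the number of doors lying on $\partial S$. The Sperner condition on the labelling forces every door on $\partial S$ to lie on the specific facet $F$ spanned by $v_1,\ldots,v_d$: a face carrying only labels from $\{1,\ldots,d\}$ cannot meet any vertex $v_k$ with $k>d$, so by the boundary rule it must sit inside that facet. This reduces the problem to counting doors on a single facet, which is itself a triangulated $(d-1)$-simplex carrying an induced Sperner labelling with labels $\{1,\ldots,d\}$.

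Finally I would invoke the inductive hypothesis: the number of doors on $F$ equals the number of fully-labelled $(d-1)$-cells of the induced triangulation, which by induction is odd. Chaining the congruences, the number of fully-labelled $d$-cells of $T$ is odd, hence nonzero, which proves the theorem. The main obstacle I anticipate is the bookkeeping in the two-or-zero-door dichotomy and verifying that the Sperner boundary condition genuinely confines all boundary doors to the single facet $F$; once that is pinned down, the parity cancellation and the induction go through cleanly.
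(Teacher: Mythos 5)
Your proof is correct, but it takes a genuinely different route from the paper. The paper never door-counts: it deduces Sperner's lemma (Corollary 3.2 in the plane, and the remark following Corollary 4.1 in general) from its main theorem (Theorem 2.1, resp.\ Theorem 4.1), which is degree-theoretic. There the labelling $L$ is converted into a piecewise-linear map $f_L$ sending each vertex labelled $i$ to the $i$-th vertex of a reference simplex $P$; the signed count of preimages of a generic interior point $y\in P$ is shown to be independent of $y$ and equal to $\deg(L,\partial T)$, the degree of the boundary map, so $T$ contains at least $|\deg(L,\partial T)|$ fully labelled cells; finally one checks that any Sperner labelling has $|\deg(L,\partial T)|=1$. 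Your inductive parity argument is more elementary --- no orientations and no degree theory --- and it proves something the paper's bound does not state: the number of fully labelled cells is odd, not merely nonzero. The paper's route buys generality in the other direction: since its only input is a degree condition on the boundary, the same argument yields the quantitative lower bound $|\deg(L,\partial T)|$ (which can be arbitrarily large) for arbitrary labellings of triangulated polygons and PL manifolds, and it specializes to the polytopal Sperner lemma of De Loera--Peterson--Su, Tucker's lemma, and Ky Fan's lemma. Your key steps are all sound: the zero/one/two-door dichotomy per cell, the mod-2 incidence count (interior doors counted twice, boundary doors once), the confinement of every boundary door to the single facet spanned by $v_1,\ldots,v_d$ (a $(d-1)$-dimensional door in $\partial S$ lies in a unique facet, whose spanning vertices must then carry all the labels $1,\ldots,d$), and the fact that the induced labelling on that facet is again Sperner, so the induction closes.
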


\medskip


The two-dimensional case is the one referred to most frequently. It is stated as follows:\\
{\it Given a triangle $ABC$, and a triangulation $T$ of the triangle. The set $V(T)$ of vertices of $T$ is colored with three colors in such a way that\\
(i) $A, B$ and $C$ are colored 1, 2 and 3 respectively\\
(ii)   Each vertex on an edge of $ABC$ is to be colored only with one of the two colors of the ends of its edge. For example, each vertex on $AC$ must have a color either 1 or 3.

Then there exists a triangle from $T$, whose vertices are colored with the three different colors.}

\medskip

Consider a convex polytope $P$ in ${\Bbb R}^d$ defined by $n$ vertices $v_1,\ldots,v_n$. Let $T$ be a triangulation of $P$, and suppose that the vertices of $T$ have a
labelling satisfying these conditions: each vertex of $P$ is assigned a unique
label from the set $\{1,2,\ldots, n\}$ and each other vertex $v$ of $T$ is assigned a
label of one of the vertices of $P$ in the {\it carrier of $v$} that is the
smallest face $F$ of $P$ that contains $v$.  Such a labelling is called a
{\it Sperner labelling} of $T$. We say that a $d$-simplex in the triangulation $T$ is a {\it fully
labelled (or colored)} simplex  if all its labels are distinct.

There are several extensions of Sperner's lemma. One of the most interesting is the De Loera - Petersen - Su theorem. In \cite{DeLPS} they proved the Atanassov conjecture \cite{Atan}.

\begin{theorem} {\bf (Polytopal Sperner's lemma \cite{DeLPS})}
Let $P$ be a convex polytope in ${\mathbb R}^d$  with $n$ vertices. Let $T$ be a triangulation of  $P$. Let $L:V(T)\to\{1,2,\ldots,n\}$ be a Sperner labelling. (Here $V(T)$ denote the set of vertices of $T$.)
 Then there are at least $(n-d)$ fully-colored $d$-simplices of $T$.
\end{theorem}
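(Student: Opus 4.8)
The plan is to realize the Sperner labelling as a piecewise-linear self-map of $P$ and to read off the fully coloured simplices from its local degree. Extend $L$ to a map $f\colon P\to P$ by sending each vertex $u$ of $T$ to the polytope vertex $v_{L(u)}$ and interpolating affinely on every simplex of $T$. The carrier condition is precisely the statement that $f(F)\subseteq F$ for every face $F$ of $P$: if $u$ lies in the relative interior of $F$ then $v_{L(u)}\in F$, and since $T$ restricts to a triangulation of $F$, affine interpolation keeps the image of $F$ inside $F$. A $d$-simplex $\sigma$ of $T$ is fully coloured exactly when $f|_\sigma$ is a nondegenerate affine map onto a full-dimensional image, i.e. when $f$ is a local homeomorphism on $\mathrm{int}\,\sigma$.

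First I would establish $\deg f=1$. Because $f$ preserves every face, $f|_{\partial P}$ is a face-preserving self-map of $\partial P\cong\mathbb{S}^{d-1}$; inducting on dimension, its restriction to each facet has degree $1$, so $f|_{\partial P}$ has degree $1$ and therefore $f\colon(P,\partial P)\to(P,\partial P)$ has degree $1$ as well. For a generic $p\in\mathrm{int}\,P$ the signed count $\sum_{\sigma}\sgn(f|_\sigma)$ over fully coloured $\sigma$ with $p\in\mathrm{int}\,f(\sigma)$ equals this degree, namely $1$. In particular at least one fully coloured simplex exists, which already recovers Sperner's lemma (Theorem 1.1); the task is to upgrade the single point $p$ to a configuration that forces $n-d$ distinct simplices.

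To do this I would introduce a \emph{pebble set}: a collection $Q=\{q_1,\dots,q_{n-d}\}$ of interior points in general position with respect to every hyperplane spanned by $d$ of the vertices $v_1,\dots,v_n$. For each $q_k$ the degree computation above produces a fully coloured simplex $\sigma_k$ with $q_k\in\mathrm{int}\,f(\sigma_k)$, and $f(\sigma_k)$ is one of the finitely many simplices $\conv(v_{i_0},\dots,v_{i_d})$ spanned by vertices of $P$. The heart of the argument is to choose $Q$ so that the assignment $q_k\mapsto\sigma_k$ can be made injective, which yields the bound $\#\{\text{fully coloured }\sigma\}\ge n-d$.

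The main obstacle is exactly the existence of a good pebble set of size $n-d$ together with the injectivity of the matching. The size $n-d$ is not accidental: it is the minimal number of $d$-simplices in any triangulation of $P$ that uses only the vertices $v_1,\dots,v_n$, and the pebbles must be placed so that no single vertex-spanned simplex $\conv(v_{i_0},\dots,v_{i_d})$ can absorb two of them in a way that collapses the count. Controlling this requires the combinatorial geometry of the vertex-spanned hyperplane arrangement of $P$, and handling the sign ambiguity in the degree sum (a point may a priori be covered by simplices of both orientations); this bookkeeping, rather than the degree theory, is where the real work lies.
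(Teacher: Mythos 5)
Your overall strategy --- realize $L$ as a piecewise-linear map $f\colon P\to P$, show the boundary degree of a Sperner labelling is $1$, and then multiply the count by placing $n-d$ pebbles --- is exactly the route this paper takes (Theorem 2.1 / Theorem 4.1 combined with Corollaries 3.2, 3.3 and 4.2). Your degree computation is sound: face preservation follows from the carrier condition, your induction over faces (or, more simply, the straight-line homotopy to the identity, which stays in $\partial P$ because each carrier face is convex) gives $\deg(f|_{\partial P})=1$, and for a generic $y$ the signed preimage count forces at least one fully coloured simplex whose image covers $y$. (One small inaccuracy: a fully coloured $\sigma$ need not have nondegenerate image, since $d+1$ distinct vertices of $P$ can be affinely dependent; but only the converse implication, nondegenerate implies fully coloured, is used, so this does no harm.)

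The genuine gap sits precisely where you parked it: the existence of a pebble set of cardinality $n-d$. First, your definition is not the right one --- interior points ``in general position with respect to every hyperplane spanned by $d$ of the vertices'' does not prevent two of your points from lying inside the same vertex-spanned simplex $\conv(v_{i_0},\ldots,v_{i_d})$, which is exactly the situation that collapses the count. The correct requirement (De Loera--Peterson--Su's pebble set, Definition 2.3 here) is that $\cov_P(q_k)\cap\cov_P(q_j)=\emptyset$ for $k\ne j$, i.e.\ no simplex spanned by vertices of $P$ contains two pebbles. With that definition the injectivity you worry about is automatic and needs no sign bookkeeping: for each $q_k$ the degree argument yields a fully coloured $\sigma_k$ with $q_k$ in the interior of $f(\sigma_k)$; if $\sigma_k=\sigma_j$ with $k\ne j$, the single vertex-spanned simplex $f(\sigma_k)$ would contain both $q_k$ and $q_j$, contradicting the pebble property. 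So the ``bookkeeping'' is trivial, but the existence of $n-d$ points with pairwise disjoint cover sets is not: it is the main combinatorial theorem of \cite{DeLPS} (nontrivial even for $d=2$, cf.\ \cite{pebble2}), and this paper does not reprove it either --- the proofs of Corollaries 3.3 and 4.2 invoke it as a black box. As written, your proposal establishes only one fully coloured simplex (Sperner's lemma); to reach $n-d$ you must either prove the pebble-set theorem or cite it.
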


\medskip

\subsection{Tucker's lemma}

Let $T$ be a triangulation of the $d$-dimensional ball ${\mathbb B}^d$. We call $T$ {\it antipodally symmetric on the boundary}  if the set of simplices of $T$ contained in the boundary of  ${\mathbb B}^d = {\mathbb S}^{d-1}$ is an antipodally symmetric triangulation of  ${\mathbb S}^{d-1}$, that is if $s\subset {\mathbb S}^{d-1}$ is a simplex of $T$, then $-s$ is also a simplex of $T$. 

\begin{theorem} {\bf (Tucker's lemma \cite{Tucker})} Let $T$ be a triangulation of  ${\mathbb B}^d$ that antipodally symmetric on the boundary. Let $$L:V(T)\to \{+1,-1,+2,-2,\ldots, +d,-d\}$$ be a labelling of the vertices of $T$ that satisfies $L(-v)=-L(v)$ for every vertex $v$ on the boundary. Then there exists an edge in $T$ that is {\bf complementary}, i.e. its two vertices are labelled by opposite numbers. 
\end{theorem}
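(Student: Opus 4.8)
The plan is to argue by contradiction and to convert the combinatorial hypothesis into a statement about the topological degree of a sphere map, which is the natural device given the theme of this paper. Suppose, for contradiction, that $T$ admits a labelling $L$ as in the statement but with \emph{no} complementary edge. I would first encode the labels geometrically using the cross-polytope $\Diamond^d=\conv\{\pm e_1,\dots,\pm e_d\}\subset\mathbb R^d$, whose vertices are the $2d$ points $\pm e_k$ and whose boundary $\partial\Diamond^d$ is homeomorphic to $\mathbb S^{d-1}$, with the origin lying in the interior of $\Diamond^d$. Define a vertex map on $V(T)$ by $f(v)=\sgn(L(v))\,e_{|L(v)|}$ and extend it affinely over every simplex of $T$, producing a piecewise-linear map $f\colon\mathbb B^d\to\mathbb R^d$.

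The key observation is that the no-complementary-edge assumption forces $f$ to miss the origin. Indeed, for any simplex $\sigma$ of $T$ its vertex labels contain no opposite pair $\{+k,-k\}$, since such a pair would sit on an edge of $\sigma$, hence on a complementary edge of $T$. Therefore the images of the vertices of $\sigma$ form an antipodally free subset of the vertices of $\Diamond^d$, which spans a proper face of $\Diamond^d$; the affine image of $\sigma$ is contained in $\conv$ of these vertex images, hence in that face and so in $\partial\Diamond^d$. Ranging over all $\sigma$ yields $f(\mathbb B^d)\subseteq\partial\Diamond^d$, so $0\notin f(\mathbb B^d)$. Composing with the radial projection $p(x)=x/\|x\|$, which identifies $\partial\Diamond^d$ with $\mathbb S^{d-1}$ and satisfies $p(-x)=-p(x)$, I obtain a continuous map $g=p\circ f\colon\mathbb B^d\to\mathbb S^{d-1}$.

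Next I would exploit the boundary symmetry. Because $T$ is antipodally symmetric on $\mathbb S^{d-1}=\partial\mathbb B^d$ and $L(-v)=-L(v)$ there, the map $f$ is odd on the boundary, i.e.\ $f(-x)=-f(x)$ for $x\in\partial\mathbb B^d$, and hence so is $g$. Thus $g|_{\partial\mathbb B^d}\colon\mathbb S^{d-1}\to\mathbb S^{d-1}$ is an antipodal (odd) map, so by Borsuk's theorem it has odd degree and in particular $\deg(g|_{\partial\mathbb B^d})\ne 0$. On the other hand $g|_{\partial\mathbb B^d}$ extends to the map $g$ defined on all of $\mathbb B^d$, so the homotopy $x\mapsto g(tx)$, $t\in[0,1]$, contracts it to a constant and forces $\deg(g|_{\partial\mathbb B^d})=0$. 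This contradiction shows that a complementary edge must exist.

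The routine parts of this plan are the construction of $f$, the verification that it avoids the origin, and the oddness on the boundary; the genuine content is concentrated in the single fact that an odd self-map of $\mathbb S^{d-1}$ has nonzero (indeed odd) degree. I expect this to be the main obstacle, because that fact is essentially equivalent to the Borsuk--Ulam theorem and therefore carries all of the difficulty. If one wishes to keep the argument self-contained and combinatorial, avoiding any appeal to topological degree, then this degree computation must be replaced by a direct parity count: one calls a simplex \emph{alternating} when its labels form an initial segment of a fixed alternating pattern, and then runs the standard door-and-room path-following scheme, pairing alternating $(d-1)$-faces across $d$-simplices and comparing the interior count with the boundary count, where the antipodal symmetry guarantees an odd total. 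A reduction straight from Sperner's Lemma is \emph{not} available, since Tucker's lemma is equivalent to Borsuk--Ulam and is thus strictly stronger than the Brouwer-type content of Sperner's statement.
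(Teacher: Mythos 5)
Your proof is correct, but it takes a genuinely different route from the one the paper uses to obtain Tucker's lemma. The paper does not argue by contradiction; it derives Tucker's lemma (Corollaries 3.4 and 4.3) from its quantitative main theorem (Theorems 2.1 and 4.1): labels are sent to the vertices of a centrally symmetric cross-polytope $P$, the absence of complementary edges \emph{on the boundary} guarantees $f_{L,P}(\partial M)\subseteq \partial P$ so that $\deg(L,\partial T)$ is defined, the antipodality of the boundary labelling forces this degree to be odd (proved combinatorially in dimension $2$ as Lemma 3.1, asserted in general in Corollary 4.3), and then the main theorem --- itself proved by counting signed preimages of the center $y=0$, whose covering simplices in $\cov_P(0)$ are exactly those spanned by vertex sets containing an antipodal pair --- produces at least $|\deg(L,\partial T)|$ internal complementary edges. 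Your argument shares the cross-polytope encoding and the key observation that complementary-free simplices land in proper faces of $\partial P$, but then diverges: instead of counting preimages you radially project to get $g:\mathbb{B}^d\to\mathbb{S}^{d-1}$ and derive a contradiction between Borsuk's antipodal theorem (an odd self-map of $\mathbb{S}^{d-1}$ has odd, hence nonzero, degree) and the null-homotopy coming from the extension of $g$ over the ball. What the paper's route buys is (i) a quantitative conclusion --- at least $|\deg(L,\partial T)|$ complementary edges rather than just one --- and (ii) a more self-contained source of the oddness, obtained combinatorially rather than imported from Borsuk's theorem, which carries the full strength of Borsuk--Ulam. What your route buys is brevity and independence from the paper's machinery; the one caveat, which you correctly flag yourself, is that the Borsuk-type degree fact is where all the difficulty sits, and you must invoke a proof of it (homological or smooth) that does not itself pass through Tucker's lemma, or the argument becomes circular.
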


\begin{figure}
\begin{center}

  \includegraphics[clip,scale=0.7]{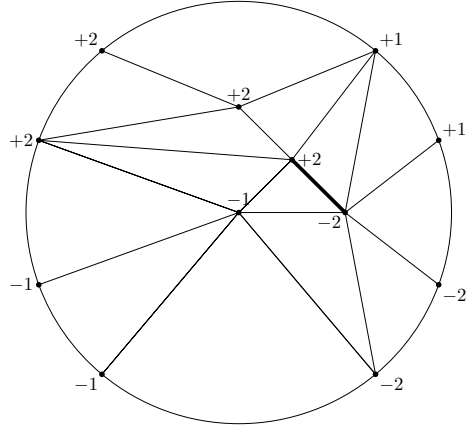}
\end{center}
\caption{A 2-dimensional illustration of Tucker's lemma}
\end{figure}

\medskip

Tucker's lemma was extended by Ky Fan \cite{KyFan}:
\begin{theorem}
Let $T$ be a triangulation of  ${\mathbb B}^d$ that antipodally symmetric on the boundary. Let $$L:V(T)\to \{+1,-1,+2,-2,\ldots, +n,-n\}$$ be a labelling of the vertices of $T$ that satisfies $L(-v)=-L(v)$ for every vertex $v$ on the boundary. Suppose this labelling does not have complementary edges. Then there are an odd number of $d$-simplices of $T$ whose labels are of the form $\{k_0,-k_1,k_2,\ldots,(-1)^dk_d\}$, where $1\le |k_0|<|k_1|<\ldots<|k_d|\le n$ and all $k_i$ have the same sign. In particular, $n\ge d+1$.
\end{theorem}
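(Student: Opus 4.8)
The plan is to prove the statement by a path-following (\emph{rooms-and-doors}) argument combined with induction on the dimension $d$, together with an auxiliary \emph{sphere version} that feeds the induction. Throughout, call a simplex \emph{alternating} if the absolute values of its labels are pairwise distinct and, when the labels are listed in increasing order of absolute value, their signs strictly alternate; the $d$-simplices we must count are precisely the alternating $d$-simplices. Call an alternating simplex \emph{positive} if the label of smallest absolute value is positive and \emph{negative} otherwise. Note that $L(-v)=-L(v)$ together with the central symmetry of the boundary triangulation interchanges positive and negative alternating simplices on $\partial\mathbb{B}^d=\mathbb{S}^{d-1}$, so there these two families are equinumerous.

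First I would fix the notion of a \emph{door}: a facet (a $(d-1)$-simplex) that is a positive alternating simplex. The combinatorial heart is a local lemma asserting that every $d$-simplex of $T$ carries $0$, $1$, or $2$ doors, and carries exactly one precisely when it is itself an alternating $d$-simplex. To see the key half, list the labels of an alternating $d$-simplex in increasing order of absolute value: deleting a vertex preserves the alternation of signs only for the two extreme (smallest- or largest-modulus) labels, and of the resulting two alternating facets exactly one begins with a positive label, hence is a door. The converse, that a non-alternating $d$-simplex has an even number of doors, is a short case analysis in which the hypothesis of no complementary edges is exactly what rules out the degenerate labelings; I expect this bookkeeping to be the most delicate elementary step.

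Next I would run the path-following argument on $\mathbb{B}^d$. Form the graph whose nodes are the $d$-simplices of $T$ together with one extra \emph{outer} node, with an edge for each door: an interior door joins the two $d$-simplices sharing it, while a boundary door joins its unique $d$-simplex to the outer node. By the local lemma the odd-degree $d$-simplices are exactly the alternating ones, so the handshake lemma gives that the number of alternating $d$-simplices has the same parity as the number of boundary doors, that is, the number of positive alternating $(d-1)$-simplices lying in $\mathbb{S}^{d-1}$. It therefore suffices to prove that this boundary count is odd.

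The remaining and hardest step is the \emph{sphere version}: on any centrally symmetric triangulation of $\mathbb{S}^m$ with an antipodal, complementary-edge-free labelling, the number of positive alternating $m$-simplices is odd. I would prove this by induction on $m$, the base $m=0$ being immediate, since of the two antipodal labelled points exactly one carries a positive label. For the inductive step I would cut $\mathbb{S}^m$ along a centrally symmetric equatorial subcomplex $\mathbb{S}^{m-1}$ into two hemispheres $H^{\pm}\cong\mathbb{B}^m$; every top-dimensional simplex then lies in exactly one hemisphere. Applying the ball parity statement above to $H^+$ shows that the number of alternating $m$-simplices of $H^+$ (positive or negative) has the parity of the number of positive alternating $(m-1)$-simplices on the equator, which is odd by the inductive hypothesis. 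The central symmetry gives a bijection between the negative alternating $m$-simplices of $H^+$ and the positive alternating $m$-simplices of $H^-$, so substituting this into the $H^+$ count turns ``positive plus negative in $H^+$'' into the total number of positive alternating $m$-simplices of $\mathbb{S}^m$, which is therefore odd. Feeding $m=d-1$ back into the boundary count completes the induction and proves the theorem; the bound $n\ge d+1$ then follows because an alternating $d$-simplex needs $d+1$ labels of distinct absolute value in $\{1,\dots,n\}$, and the odd count guarantees that at least one such simplex exists. The principal obstacle I anticipate is the symmetry bookkeeping in this final step, in particular arranging, or circumventing the need for, a centrally symmetric equatorial subcomplex, since a general centrally symmetric triangulation of $\mathbb{S}^m$ need not contain one; one may instead have to subdivide antipodally or argue the parity directly on the antipodal quotient.
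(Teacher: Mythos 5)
Your strategy is the classical combinatorial route to Ky Fan's lemma (doors and path-following plus induction on dimension, as in Fan's original paper and in the constructive proof of Prescott and Su), which is genuinely different from this paper's treatment: the paper does not reprove Theorem 1.4 at all but quotes it from Ky Fan, and the related statements it does prove (Corollaries 3.5 and 4.4) are obtained from the degree-theoretic Theorem 2.1/4.1 together with the oddness of the boundary degree (Lemma 3.1). Your work on the ball is sound: the local lemma (every $d$-simplex carries $0$, $1$ or $2$ doors, and exactly $1$ precisely when it is alternating, the no-complementary-edge hypothesis ruling out the degenerate cases) is correct, and the handshake argument correctly reduces the theorem to the claim that the number of positive alternating $(d-1)$-simplices in the boundary sphere is odd.

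The genuine gap is exactly where you anticipate it: the inductive step for the sphere version. A general antipodally symmetric triangulation of ${\mathbb S}^m$ need not contain a centrally symmetric subcomplex triangulating an equatorial ${\mathbb S}^{m-1}$, so the decomposition into two triangulated hemispheres $H^{\pm}$ is simply not available, and the induction as written does not go through. Neither of your proposed repairs is routine. Antipodal subdivision changes the set of alternating simplices, so one would need the parity of their number to be invariant under subdivision and relabelling; proving that combinatorially amounts to developing mod~2 degree theory, i.e.\ essentially the content of the theorem itself (and essentially the route this paper takes topologically). The quotient approach fares no better as stated: on the antipodal quotient the positive/negative distinction disappears (antipodality pairs each positive alternating simplex with a negative one), and the doors count there degenerates --- every top-dimensional simplex class has an \emph{even} number of alternating facet classes ($2$ for alternating classes, $0$, $2$ or $4$ otherwise), while every facet class lies in exactly two top simplices, so the incidence count yields $0\equiv 0 \pmod 2$ and no information; upstairs it only recovers the statement that the \emph{total} number of alternating $m$-simplices is even, which is consistent with, but does not imply, the required oddness of the positive count. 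This difficulty is well known: Prescott and Su's constructive proof assumes the symmetric triangulation is aligned with a flag of hemispheres precisely so that the equators you need exist, and Fan's original induction for arbitrary symmetric triangulations uses a subtler counting argument rather than a hemisphere cut. As it stands, your argument establishes the theorem only for triangulations whose boundary triangulation admits such an equatorial flag.
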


\subsection{Main results and examples in dimension two}

In this paper we consider an extension of Sperner's lemma, see Theorem 2.1 for two dimensions and Theorem 4.1 for the general case, that also yields extensions of a polytopal Sperner's lemma, Tucker's lemma and  Ky Fan's lemma. Consider here only two-dimensional corollaries given in Section 3. 

Let $L:V\to\{1,2,3\}$ be a labelling of a set $V:=\{v_1,\ldots, v_m\}$ in  a circle.  Let  $$\deg([1,2],L):=p_*-n_*,$$ where $p_*$ (respectively, $n_*$) is the number of (ordering) pairs $(v_k,v_{k+1})$ such that $L(v_k)=1$ and $L(v_{k+1})=2$ (respectively, $L(v_k)=2$ and $L(v_{k+1})=1$). 

\medskip

For instance, let $L=(1221231232112231231)$. Then $p_*=5$ and $n_*=2$. Thus, $\deg([1,2],L)=5-2=3.$

Note that if instead of $[1,2]$ we take $[2,3]$ or $[3,1]$, then we get that $\deg([1,2],L)=\deg([2,3],L)=\deg([3,1],L)$, see Lemma 2.1.

\medskip

\begin{figure}
\begin{center}

  \includegraphics[clip,scale=0.9]{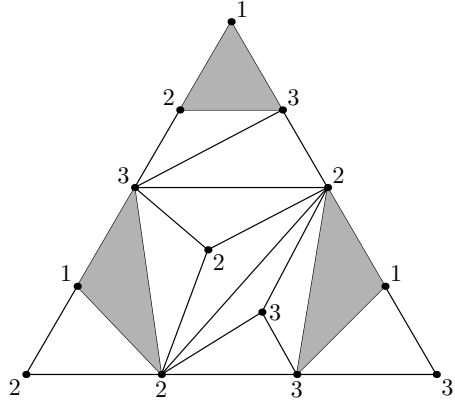}
\end{center}
\caption{$\deg(L,\partial T)|=3$. There are three fully labelled triangles, see Corollary 3.1.}
\end{figure}

\medskip


Let $T$ be a triangulation of a simple polygon $M$. Denote by $\partial T$ the boundary of $T$. Then $\partial T$ is a polygonal contour with vertices $v_{1},\ldots,v_m$ that can be considered as points in a circle.  We assume that these vertices are in counterclockwise order. 

Let $L:T\to\{1,2,3\}$ be a labelling. This labelling implies the labelling  $L_0:\partial T\to\{1,2,3\}$. Denote $\deg(L,\partial T):=\deg([1,2],L_0)$. 

\medskip

\noindent{\bf Corollary 3.1.} {\it Let $T$ be a triangulation of a planar polygon $M$. Then for any labelling $L:V(T)\to \{1,2,3\}$  $T$ must  contain at least $|\deg(L,\partial T)|$ fully colored triangles.}

\medskip

This corollary is extended for all dimensions in Corollary 4.1 for oriented manifolds and in Corollary 4.5 for non-orientable manifolds. 

\medskip

Consider two examples. In Fig. 1 is given a Sperner's labelling. For this case $\deg(L,\partial T)=1$. Therefore, Corollary 3.1 yields that there exist at least one triangle with labels $1,2,3$. 

In Fig. 3 is shown a labelling with  $\deg(L,\partial T)=3$. There we have exactly three fully labelled triangles. 

\medskip

Actually, we can extend all results for the case when  $M$ is a polygon with holes. In Definition 2.2  is considered this case. 

In Section 2 also considered $n$-labellings $L:V\to\{1,2,\ldots,n\}$. For $n>3$ we consider only  neighboring labellings $\nl$, see Definition 2.1. Let $T$ be a triangulation of  $M$. We write that $L\in\nl(T,n)$, where $L:V(T)\to\{1,2,\ldots,n\}$, if $L$ is $\nl$ for any connected component of $\partial T$.

\medskip 

\noindent{\bf Corollary 3.3.} {\it Let $T$ be a triangulation of a planar polygon $M$. Let $L$ be a labelling such that  $L\in\nl(T,n)$. Then the number of fully labelled triangles in $T$ is at least $(n-2)|\deg(L,\partial T)|$.}

\medskip

Corollary 4.2 extends this corollary for all dimensions. 

\medskip

\begin{figure}
\begin{center}

  \includegraphics[clip,scale=0.9]{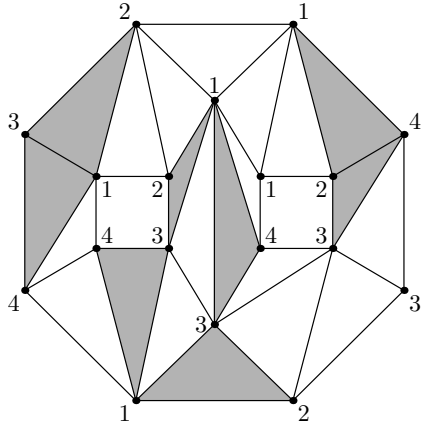}
\end{center}
\caption{Octagon with two square holes.  Here $n=4$, $\deg(L,\partial T)|=4$ and there are eight fully labelled triangles, see Corollary 3.3.}
\end{figure}

\medskip

In Fig. 4 we present a labelling with $n=4$ and $\deg(L,\partial T)=4$. In this example there are exactly eight fully labelled triangles. 

\medskip

Tucker's and Ky Fan's lemmas also follow from Theorem 2.1, see Corollaries 3.4 and 3.5. Consider an extension of Tucker's lemma. If we have a labelling $L:V(T)\to \{+1,-1,+2,-2\}$, then it can be considered as a $4$-labelling with the correspondence $1\to 1$, $2\to 2$, $-1\to 3$, and $-2\to 4$.  

\medskip

\noindent{\bf Corollary 3.4.} {\it Let $T$ be a triangulation of  ${\mathbb B}^2$ that antipodally symmetric on the boundary. Let $L:V(T)\to \{+1,-1,+2,-2\}$ be a labelling  that is antipodal on the boundary. Suppose there are no complementary edges on the boundary. Then there are at least $|\deg(L,\partial T)|$ internal  complementary edges. In particular, there is at least one.} 

(Note that Lemma 3.1 states that $\deg(L,\partial T)$ is odd.)

\medskip

\begin{figure}
\begin{center}

  \includegraphics[clip,scale=0.9]{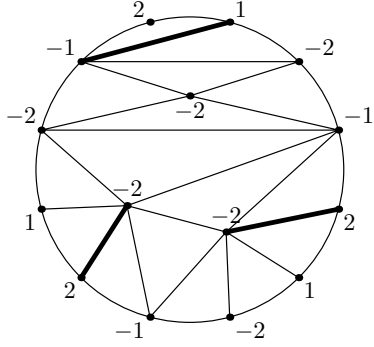}
\end{center}
\caption{Since $\deg(L,\partial T)=3$,  there are  three  complementary edges (Corollary 3.4).}
\end{figure}

\medskip

In Fig. 2 is given a labelling with $\deg(L,\partial T)=1$ and exactly one complementary edge. In Fig. 5 we have  $\deg(L,\partial T)=3$ and  three  complementary edges.  

Higher dimensional versions of Corollaries 3.4 and 3.5 are Corollaries 4.3 and 4.4.

\section{A generalization of the planar Sperner lemma}


\noindent{\bf Definition 2.1.} Let labels  $\{1,2,...,n\}$ be in cyclic order i. e. $i$ and $i+1$ as well as $n$ and $1$ are neighbors. Let $V:=\{v_1,\ldots, v_m\}$ be points in a circle. We  call $L:V\to\{1,2,\ldots,n\}$  a {\it neighboring labelling} and write $L\in\nl(m,n)$  if for all vertices  $v_i$ and $v_{i+1}$, where $v_{m+1}=v_1$, 
either $L(v_i)=L(v_{i+1})$ or $L(v_i)$ and $L(v_{i+1})$ are neighbors.

Let for $L\in\nl(m,n)$,  $$\deg([i,i+1],L):=p(i)-n(i),$$ where $p(i)$ (respectively, $n(i)$) is the number of (ordering) pairs $(v_k,v_{k+1})$ such that $L(v_k)=i$ and $L(v_{k+1})=i+1$ (respectively, $L(v_k)=i+1$ and $L(v_{k+1})=i$). 

Similarly $\deg([i+1,i],L)$ can be defined. 
It is clear that $$\deg([i+1,i],L)=-\deg([i,i+1],L).$$

\medskip

Note that any 3-labelling $L\in\nl(m,3)$.

\begin{lemma} Let $L\in\nl(m,n)$. Then $$\deg([1,2],L)=...=\deg([n-1,n],L)=\deg([n,1],L).$$
\end{lemma}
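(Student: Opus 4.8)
The plan is to read the labelling along the boundary circle as a closed walk on the cycle graph $C_n$ whose vertices are the labels $\{1,2,\ldots,n\}$ and whose edges are the neighbor pairs $\{i,i+1\}$ (indices taken cyclically, so $\{n,1\}$ is an edge). The hypothesis $L\in\nl(m,n)$ is exactly what makes this legitimate: each step $v_k\to v_{k+1}$ either keeps the same label (when $L(v_k)=L(v_{k+1})$, a ``stay'' that crosses no edge of $C_n$) or moves to a neighbor (when the labels are neighbors, crossing a single edge). Hence the cyclic sequence $L(v_1),L(v_2),\ldots,L(v_m),L(v_1)$ is a genuine closed walk on $C_n$, and under this identification $\deg([i,i+1],L)=p(i)-n(i)$ is precisely the net number of times the walk crosses the edge $\{i,i+1\}$, counted $+1$ for a crossing $i\to i+1$ and $-1$ for a crossing $i+1\to i$.

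First I would fix the orientation $i\to i+1$ on each edge $e_i:=\{i,i+1\}$ and write $a_i^{+}$ (resp.\ $a_i^{-}$) for the number of steps of the walk traversing $e_i$ positively (resp.\ negatively), so that $\deg([i,i+1],L)=a_i^{+}-a_i^{-}$. The key step is conservation of flow at each label $j$. Because the closed walk decomposes the cyclic index set into maximal runs of consecutive $j$'s, the number of steps that \emph{enter} $j$ from a different label equals the number that \emph{leave} $j$ to a different label (each run is preceded by one entering step and followed by one leaving step, and there are equally many of each on a cycle; a constant walk gives $0=0$). The only edges at $j$ in $C_n$ are $e_{j-1}$ and $e_j$, so entering steps are the positive traversals of $e_{j-1}$ together with the negative traversals of $e_j$, and leaving steps are the positive traversals of $e_j$ together with the negative traversals of $e_{j-1}$. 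Equating these counts gives
$$a_{j-1}^{+}+a_j^{-}=a_j^{+}+a_{j-1}^{-},$$
which rearranges to $a_{j-1}^{+}-a_{j-1}^{-}=a_j^{+}-a_j^{-}$, that is $\deg([j-1,j],L)=\deg([j,j+1],L)$. Since this holds for every $j$ with indices read cyclically, all of $\deg([1,2],L),\ldots,\deg([n-1,n],L),\deg([n,1],L)$ coincide, as claimed.

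The step I expect to require the most care is the bookkeeping at a vertex $j$: one must correctly assign each orientation class on each of the two incident edges to ``into $j$'' or ``out of $j$'', and observe that the stay-steps $L(v_k)=L(v_{k+1})=j$ cross no edge and so contribute to neither count. Once this accounting is in place the conclusion is immediate from the displayed telescoping identity. As a sanity check, the worked example $L=(1221231232112231231)$ indeed yields $\deg([1,2],L)=\deg([2,3],L)=\deg([3,1],L)=3$, consistent with the lemma.
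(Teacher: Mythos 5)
Your proof is correct, but it takes a genuinely different route from the paper's. The paper argues globally, via a winding-number (lifting) computation: it sums the increments $s_i=L(v_{i+1})-L(v_i)\in\{-1,0,1\}$ along the closed walk, treats the cyclic labels as defined only up to multiples of $n$ (i.e.\ lifts the walk to $\mathbb{Z}$), and shows the total sum equals $q_m n$, where $q_m$ is the number of full cycles traversed in the positive direction minus the number in the negative direction; since each net full cycle crosses every edge $[i,i+1]$ exactly once, $q_m=\deg([i,i+1],L)$ for all $i$ simultaneously. Your argument is instead local: conservation of flow at each vertex $j$ of the cycle graph $C_n$ --- entering steps equal leaving steps because maximal runs of $j$'s along a closed walk are entered and left alternately --- which gives $\deg([j-1,j],L)=\deg([j,j+1],L)$ for each $j$, and the full chain of equalities follows by going around the cycle. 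Your route is more elementary and sidesteps the paper's most delicate (and most informally written) step, the multivalued-lifting bookkeeping $L(v_j)=\ell_j+r_jn$; the only place the NL hypothesis enters is the clean observation that every step into or out of $j$ must cross one of the two edges of $C_n$ incident to $j$. What the paper's approach buys in exchange is an explicit identification of the common value as the winding number $q_m$ of the boundary walk around the label cycle, which is precisely the interpretation used in Remark 2.1 (where $\deg(L)$ is the Brouwer degree of the PL map $f_L:\mathbb{S}^1\to\mathbb{S}^1$) and throughout the degree-theoretic arguments of Sections 2--4; your proof establishes the equalities but does not by itself supply that geometric meaning.
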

\begin{proof} Denote  $s_i:=L(v_{i+1})-L(v_i)$ for $i=1,\ldots,m$. Then $s_i=-1,\; 0$ or $1$. Let  $S_k:=s_1+\ldots+s_k$. Now we prove that $$S_k=L(v_{k+1})-L(v_1)+ q_kn,$$
where $q_k$ is integer. 

Indeed, without loss of generality we may assume that $L(v_1)=1$. Let $i$ is the next index in the sequence $1,2,\ldots$, where $L(v_i)=1$.   Then there are three possibilities:  $S_i=0,\; S_i=-n$ or $S_i=n$.  Since labels $\{1,2,...,n\}$  are  in cyclic order the labeling $L$ is multivalued and it is defined up to $n$, i. e. $L(v_j)=\ell_j+r_jn$ with $r_j\in{\Bbb Z}$. It is clear that   $r_i=q_i=0,-1,1$.    So $S_i=n$ if the sequence $L_i:=(L(v_1),L(v_2),\ldots,L(v_i))$ makes a full cycle in the positive direction around labels $(1,2,\ldots, n,1)$, $S_i=-n$ if it is a full cycle in the negative direction, and $S_i=0$ if it is not a full cycle. 

We can consider the next index $j$ with $L(v_j)=1$ and so on. Therefore, $q_k=P_k-N_k$, where $P_k$ (respectively $N_k$) is the number of full cycles in positive (respectively, in negative) direction in $L_k$. 

For $k=m$ we have $S_m=q_mn$. From the equality $q_k=P_k-N_k$ follows that $q_m=\deg([i,i+1],L)$ for all $i$. 
\end{proof}

Since $\deg([i,i+1],L)$ does not depend on $i$, denote  
$\deg(L):=\deg([1,2],L)$.  

\medskip

\noindent{\bf Remark 2.1.} Actually, $\deg(L)$ is the degree of a piece-wise linear mapping $f_L:{\Bbb S}^1\to{\Bbb S}^1$, where $f_L(v_i)=q_i,\, v_i\in V\subset {\Bbb S}^1$ and $\{q_1,\ldots,q_n\}$ is a point set on ${\Bbb S}^1$. The degree of a continuous mapping between two compact oriented manifolds of the same dimension is a number that represents the number of times that the domain manifold wraps around the range manifold under the mapping (see for the case ${\Bbb S}^1$ \cite[Chapter 11]{Shashkin}).

\medskip


Consider a polygon $M$ in the plane.  Actually, we do not assume that $M$ is a simple polygon,  perhaps  $M$ is a {\it polygon with holes}, i. e. $M$ enclosing several other polygons $H_1,\ldots,H_k$. None of the boundaries of $M$, $H_1,\ldots,H_k$ may intersect, and each the hole is empty. $P$ is said to bound a multiply-connected region with $k$ holes: the region of the plane interior to or on the boundary of $M$, but exterior to or on the boundary of $H_1,\ldots, H_k$. (A polygon without holes is said to be simply-connected.) 

Denote the outer boundary of $M$ by $H_0$. Then the boundary of $M$, we denote it by $\partial M$, consists of $H_0,\ldots,H_k$.  It is well-known - a polygon with holes may be triangulated. 

\medskip

\noindent{\bf Definition 2.2.} 
Let $T$ be a triangulation of a polygon $M$. Consider a labeling $L:V(T)\to \{1,2,...,n\}$, where $V(T)$ denote the set of vertices of $T$. If $\partial T$ is the disjoint union of polygons $H_0$,\ldots,$H_k$, then $L_i:=L|_{H_i}:V(H_i)\to \{1,2,...,n\}$ is a labelling on the circle $H_i$. 

 We write that $L\in\nl(T,n)$ if for all $i=0,\ldots,k$ hold $L_i\in\nl(m_i,n)$, where $m_i=|V(H_i)|$ is the number of vertices of $H_i$. 

Let vertices $v_{01},v_{02},\ldots,v_{0m_0}$ of the polygonal contour $H_0$ are  in counterclockwise order and vertices $v_{i1},v_{i2},\ldots,v_{im_i}$, $i>0$, of  $H_i$ are  in clockwise order. So $M$ is a positively oriented polygon  such that when traveling on the boundary vertices  always  the  interior of $M$ is to the left (and consequently, $M$ exterior to the right).

Let $L\in\nl(T,n)$. Now we define $\deg(L,\partial T)$. 
$$
\deg(L,\partial T):=\deg(L_0)+\deg(L_1)+\ldots+\deg(L_k).
$$

\noindent{\bf Definition 2.3.}
Let $P$ be  a set  of $n$ points $v_1,\ldots,v_n$ in the plane  ${\Bbb R}^2$. Denote by $S(P)$ the collection of all triangles spanned  by vertices $\{v_i,v_j,v_k\}$ with $1\le i<j<k\le n$. Consider a point  $x\in{\Bbb R}^2$  and the set $S_x(P)$ of all triangles from $S(P)$ that cover $x$. If no such triangles exist, we write  $S_x(P)=\emptyset$.  Denote this set of triangles by $\cov_P(x)$ or just  by $\cov(x)$. 

\medskip

\begin{figure} 
\begin{center}

  \includegraphics[clip,scale=1.2]{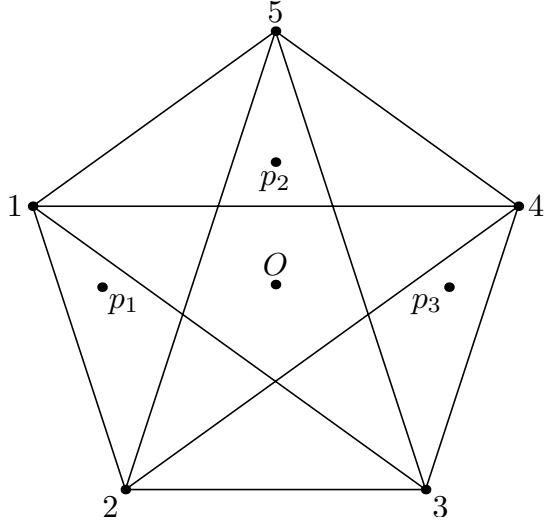}
\end{center}
\caption{Pebbles and $\cov(x)$ for a pentagon}
\end{figure}

\medskip

\noindent{\bf Example}. Let $P$ be a pentagon, see Fig. 6.  Then 
$$ 
\cov(p_1)=(123)\cup(124)\cup(125); \; 
\cov(p_2)=(135)\cup(145)\cup(235)\cup(245); \; 
$$
$$ 
\cov(p_3)=(134)\cup(234)\cup(345); \; 
\cov(O)=(124)\cup(134)\cup(135)\cup(235)\cup(245); \; 
$$

\begin{theorem} \label{SpM} Let $P:=\{y_1,\ldots,y_n\}$ be a convex polygon. Let $T$ be a triangulation of a planar polygon $M$.  Then for any $y\in P$ and a  labelling $L\in\nl(T,n)$ the triangulation $T$  must contain at least $|\deg(L,\partial T)|$ triangles that are labelled as triangles in $\cov_P(y)$. 
\end{theorem}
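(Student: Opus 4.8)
The plan is to reduce the statement to a degree computation for a piecewise-linear map built from the labelling. Given $L$ and the convex polygon $P=\{y_1,\dots,y_n\}$ (with the $y_i$ listed in counterclockwise cyclic order), define a map $f_L\colon M\to P$ by sending each vertex $v\in V(T)$ to the point $y_{L(v)}$ and extending affinely over each triangle of $T$. Since $P$ is convex, the image of a triangle $t$ whose vertices carry labels $\{a,b,c\}$ is exactly the (possibly degenerate) triangle $\conv\{y_a,y_b,y_c\}$, i.e. the polygon triangle $(abc)$. Hence a triangle of $T$ is labelled as a triangle in $\cov_P(y)$ precisely when $y\in f_L(t)$, and the quantity we must bound from below is the number of triangles $t$ with $y$ in their image.

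First I would fix $y$ to be a regular value: an interior point of $P$ lying on none of the finitely many segments $y_ay_b$. For such $y$ the standard degree formula for a PL map of the oriented surface-with-boundary $M$ reads $\deg(f_L,y)=\sum_{t}\,\sgn\det(Df_L|_t)$, the sum running over the triangles $t$ with $y\in\operatorname{int}f_L(t)$; each such triangle is nondegenerate, so its sign is $\pm1$. Consequently the number of covering triangles is at least $|\deg(f_L,y)|$, and it remains to identify $\deg(f_L,y)$ with $\deg(L,\partial T)$.

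The key step is to compute $\deg(f_L,y)$ as a boundary winding number. By degree theory $\deg(f_L,y)$ equals the winding number about $y$ of $f_L$ restricted to $\partial M=H_0\cup\cdots\cup H_k$, taken with the induced boundary orientation (outer contour $H_0$ counterclockwise, inner contours $H_i$ clockwise), which is exactly the orientation in which the vertices of each $H_i$ are listed in Definition 2.2. Here the neighboring hypothesis $L\in\nl(T,n)$ does the essential work: consecutive vertices of $H_i$ carry equal or cyclically adjacent labels, so the image of each edge is either a point or a side of the convex polygon $P$, and $f_L|_{H_i}$ is a closed path running entirely along $\partial P$. Because $y$ is interior to $P$, the winding number of such a path about $y$ is simply its net number of counterclockwise traversals of $\partial P$, which is precisely the integer $\deg(L_i)$ of Lemma 2.1 and Remark 2.1. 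Summing over the boundary components gives $\deg(f_L,y)=\sum_{i=0}^{k}\deg(L_i)=\deg(L,\partial T)$, and combining with the previous paragraph yields the bound.

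I expect the main obstacle to be the orientation bookkeeping in this last step — matching the induced boundary orientation of $M$ with the listed vertex order on each $H_i$ and with the counterclockwise labelling of $P$, so that every $\deg(L_i)$ enters with the correct sign. A secondary point to settle is the passage from a regular $y$ to an arbitrary $y\in P$: since $f_L|_{\partial M}$ lies on $\partial P$, the winding number $\deg(L,\partial T)$ is the same for every interior $y$, so a short perturbation argument, approximating a non-regular or boundary point $y$ by nearby regular values and using that the covering triangles are closed, extends the inequality to all $y\in P$.
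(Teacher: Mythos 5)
Your proposal is correct and takes essentially the same route as the paper: both construct the simplicial map $f_L$ sending a vertex labelled $i$ to $y_i$, identify triangles labelled as members of $\cov_P(y)$ with preimages of a regular value $y$, and equate the resulting signed preimage count with the boundary degree $\deg(L,\partial T)$. The only difference is in the bookkeeping of that last identity --- the paper proves the $y$-independence of the degree by hand (moving $y$ across the diagonals of $P$ and then evaluating at a point near the edge $y_1y_2$), whereas you invoke the standard fact that the degree equals the winding number of $f_L|_{\partial M}$ about $y$ and compute that winding number combinatorially via Lemma 2.1; both justifications are valid.
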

\begin{proof} 
	Let  $v\in V(T)$, i. e. $v$  is a vertex of $T$.  If $L(v)=i$, then we set  $f_{L}(v):=y_i$. Therefore, $f_{L}$ is defined for all vertices of $T$. Now we show that it defines a simplicial (piecewise linear) mapping $f_{L}:M\to {\Bbb R}^2$. 
	
For every triangle $t\in T$ with vertices $u,v,w$ we  have three correspondent points $y_i,y_j,y_k$ in the plane. (Here $i,j,k$ are not necessarily different.) It is well known that for given three non-collinear points in the plane there exists the unique linear (affine) transformation that carries these points to some other given three points in the plane. It defines $f_{L}:t\to {\Bbb R}^2$ for all $t$ and so $f_{L}:M\to {\Bbb R}^2$ is well defined.   

Let $y$ be an internal point in $P$. Suppose that $\cov_P(y)$  does not contain triangles with $y$ on  its boundary, or equivalently, there are no edges $y_iy_j$ that contain $y$. Then the set of preimages  $f_{L}^{-1}(y)$ is empty or consists of $\{x_k\}\subset M$ such that every $x_k$ lies inside of some triangle $t_k\in T$ that is labelled as one of  triangles in $\cov_P(y)$. For all $x_k$ can be assigned its signs. If $f_{L}:t_k\to {\Bbb R}^2$   is orientation preserving, assign  $\sgn(x_k):=+1$, and if it is orientation reversing, assign  $\sgn(x_k):=-1$. If $f_{L}^{-1}(y)$ is not empty set then  $\deg(L,y)$ is the sum of all  $\sgn(x_k)$,  otherwise it is 0.  

Now we show that $\deg(L,y)$ does not depend on $y$ and is equal to $\deg(L,\partial T)$. Indeed, let us cut $P$ by diagonals $x_ix_j$ into connected components. Suppose $p$ and $q$ are points that belong to the same  component. It is clear that $f_{L}^{-1}([pq])$ consists of segments $[uw]$ such that both $u$ and $w$ lie in some triangle of $T$. It is immediately implies the equality   $\deg(L,p)=\deg(L,q)$. It is easy to see that in the case when $[pq]\subset P$ intersects only one diagonal $x_ix_j$ the set of preimages  $f_{L}^{-1}([pq])$ can consist of three types of segments $[u_1u_2]$, $[w_1w_2]$ and $[uw]$, where $f_L(u_1)=f_L(u_2)=f_L(u)=p$ and $f_L(w_1)=f_L(w_2)=f_L(w)=q$. Since  $\sgn(u_1)=-\sgn(u_2)$, $\sgn(w_1)=-\sgn(w_2)$ and $\sgn(u)=\sgn(w)$, we have $\deg(L,p)=\deg(L,q)$. Thus, this equality holds for all $p$ and $q$ from $P$ and if we take a point $p$ that is closed to the edge $y_1y_2$, then we get $\deg(L,p)=\deg(L,\partial T)$.
\end{proof}

\noindent{\bf Remark 2.2.} We see that our proof of the theorem immediately follows from the fact that the degree of a continuous mapping does not depend on $y$. (Moreover, it is well known that the degree  is a homotopy invariant.) So if for a space the degree of a mapping is well defined, then some version of this theorem holds, see Section 4.   

\section{Corollaries}

Let $P$ be a triangle in the plane. Take any internal point $x\in P$. It is clear that $\cov(x)=(123)$.  Therefore Theorem 2.1 yields 

\begin{cor} Let $T$ be a triangulation of a planar polygon $M$. Then any labelling $L:V(T)\to \{1,2,3\}$ $T$  must  contain at least $|\deg(L,\partial T)|$ fully colored triangles. 
\end{cor}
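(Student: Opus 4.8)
The plan is to specialize Theorem~\ref{SpM} (Theorem 2.1) to the case where the target polygon $P$ is a triangle with vertices $y_1,y_2,y_3$, since Corollary 3.1 concerns exactly a $3$-labelling $L:V(T)\to\{1,2,3\}$. The first observation is that any $3$-labelling automatically lies in $\nl(T,3)$: as noted right after Definition 2.1, with only three labels arranged cyclically, every pair of distinct labels is a pair of neighbors, so the neighboring condition is vacuously satisfied on each boundary component. Hence the hypothesis $L\in\nl(T,n)$ of Theorem 2.1 is met with $n=3$ at no cost, and $\deg(L,\partial T)$ is well defined by Definition 2.2.

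Next I would identify $\cov_P(y)$ for the triangle. Taking $P$ to be the triangle $y_1y_2y_3$ and choosing any interior point $x\in P$, the only triangle in $S(P)$ that covers $x$ is the whole triangle $(123)$ itself, so $\cov_P(x)=(123)$. This is precisely the remark preceding the corollary. Consequently, a triangle of $T$ that is ``labelled as a triangle in $\cov_P(x)$'' in the sense of Theorem 2.1 is exactly a triangle of $T$ whose three vertices carry the three distinct labels $1,2,3$, i.e.\ a fully colored triangle.

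With these two identifications in hand, the corollary follows by direct substitution: Theorem 2.1 guarantees that $T$ contains at least $|\deg(L,\partial T)|$ triangles labelled as triangles in $\cov_P(y)=(123)$, and these are exactly the fully colored triangles. So the count $|\deg(L,\partial T)|$ transfers verbatim.

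There is essentially no obstacle here, since all the work was done in Theorem 2.1; the corollary is a clean specialization. The only points that require a word of justification are the two reductions above, namely that a $3$-labelling is always a neighboring labelling (so the theorem applies) and that $\cov$ of a triangle is the single cell $(123)$ (so that ``labelled as in $\cov$'' means ``fully colored''). I would state these two facts explicitly and then invoke Theorem 2.1 to conclude. One could optionally remark that the choice of interior point $y$ is immaterial, consistent with the degree being independent of $y$ as established in the proof of Theorem 2.1.
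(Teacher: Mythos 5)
Your proposal is correct and follows essentially the same route as the paper: the paper's proof of Corollary 3.1 is exactly the specialization of Theorem 2.1 to a triangle $P$, with the observation that $\cov_P(x)=(123)$ for an interior point $x$, so that ``labelled as in $\cov_P(x)$'' means fully colored. Your additional explicit remark that every $3$-labelling lies in $\nl(T,3)$ matches the paper's note after Definition 2.1 and fills in the only hypothesis that needed checking.
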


\begin{cor}[Sperner's lemma] 
	Any 3-Sperner labelling of a triangulation of a triangle contains a fully labelled triangle. 
\end{cor}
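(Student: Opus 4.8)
The plan is to derive Sperner's lemma as an immediate special case of Corollary 3.1, which itself followed from Theorem \ref{SpM}. The key observation is that a Sperner labelling forces the boundary degree $\deg(L,\partial T)$ to equal $\pm 1$, and then Corollary 3.1 guarantees at least $|\deg(L,\partial T)| = 1$ fully labelled triangle. So the entire content of the proof reduces to a boundary computation: I must show that the Sperner conditions on the triangle $ABC$ imply $\deg(L_0) = \pm 1$, where $L_0$ is the induced labelling on the polygonal contour $\partial T = H_0$.

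First I would recall the Sperner setup for $d=2$: the triangle has vertices $A,B,C$ labelled $1,2,3$, each vertex on edge $AC$ is labelled $1$ or $3$, each vertex on $AB$ is labelled $1$ or $2$, and each vertex on $BC$ is labelled $2$ or $3$. Traversing $\partial T$ counterclockwise, say starting at $A$ and going $A \to B \to C \to A$, the labels along the edge $AB$ take only values in $\{1,2\}$, along $BC$ only values in $\{2,3\}$, and along $CA$ only values in $\{3,1\}$. Since $\deg(L_0)=\deg([1,2],L_0)=p_*-n_*$ counts the net number of oriented transitions from label $1$ to label $2$ among consecutive boundary vertices, I would argue that such a $1\!\to\!2$ or $2\!\to\!1$ transition can only occur while traversing the edge $AB$ (the only edge whose vertices carry both labels $1$ and $2$). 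Along $AB$ the labels begin at $L(A)=1$ and end at $L(B)=2$, taking values in $\{1,2\}$ throughout, so the net count of $1\!\to\!2$ transitions minus $2\!\to\!1$ transitions along this edge telescopes to exactly $1$. Hence $\deg(L_0)=1$.

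With $\deg(L,\partial T)=\deg(L_0)=1$ established, applying Corollary 3.1 to the triangle $M=ABC$ immediately gives at least $|\deg(L,\partial T)|=1$ fully colored triangle, which is exactly the assertion. I should note that a $3$-labelling always lies in $\nl(T,3)$ automatically (as remarked after Definition 2.1, \emph{any} $3$-labelling is a neighboring labelling, since with three cyclically ordered labels any two distinct labels are neighbors), so no extra hypothesis is needed to invoke Corollary 3.1.

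The main obstacle, and really the only nontrivial point, is the clean telescoping argument showing that off the edge $AB$ no net $[1,2]$-transition survives and along $AB$ the net transition is exactly $+1$. One must be slightly careful at the three corners $A,B,C$ where adjacent boundary edges meet: the corner vertices carry a single definite label ($A=1$, $B=2$, $C=3$), so the transitions at the corners are well defined and are correctly attributed to the appropriate edge. A cleaner alternative I could use is Lemma 2.1 together with Remark 2.1: since $\deg([1,2],L_0)=\deg([2,3],L_0)=\deg([3,1],L_0)$ and $\deg(L_0)$ is the winding number of the piecewise-linear map $f_{L_0}:{\mathbb S}^1\to{\mathbb S}^1$ sending boundary vertices to the three cyclically placed labels, the Sperner boundary conditions force this map to wind exactly once around, so $\deg(L_0)=1$. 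Either route closes the argument.
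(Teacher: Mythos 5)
Your proposal is correct and follows exactly the paper's route: the paper proves this corollary by asserting that any Sperner labelling has $|\deg(L,\partial T)|=1$ and then invoking Corollary 3.1. The only difference is that the paper dismisses the degree computation as ``easy to see,'' whereas you supply the details (transitions confined to the edge $AB$, telescoping to net count $1$), which is a faithful filling-in of the same argument rather than a different approach.
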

\begin{proof}
It is easy to see that for any Sperner's labelling $L$ we have $|\deg(L,\partial T)|=1$. Thus,  Corollary 3.1 implies Sperner's lemma. 
\end{proof}

The following corollary extends the De Loera - Petersen - Su theorem in two dimensions.

\begin{cor} Let $T$ be a triangulation of a planar polygon $M$. Let $L$ be a labelling such that  $L\in\nl(T,n)$. Then the number of fully labelled triangles in $T$ is at least $(n-2)|\deg(L,\partial T)|$.
\end{cor}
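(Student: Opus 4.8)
The plan is to derive the bound from Theorem~\ref{SpM} by evaluating it at $n-2$ judiciously placed target points whose covering sets are pairwise disjoint. Concretely, I would first isolate the following purely geometric statement about the convex polygon $P=\{y_1,\dots,y_n\}$: there exist points $z_1,\dots,z_{n-2}$ in the interior of $P$, none lying on a diagonal $y_iy_j$, such that every triangle $(ijk)\in S(P)$ contains at most one of them. Equivalently, the covering sets $\cov_P(z_1),\dots,\cov_P(z_{n-2})$ are pairwise disjoint.

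Granting this, the corollary follows quickly. Applying Theorem~\ref{SpM} with $y=z_s$ produces, for each $s$, at least $|\deg(L,\partial T)|$ triangles of $T$ whose label-triple lies in $\cov_P(z_s)$; since every triple in a covering set consists of three distinct vertices of $P$, each such triangle of $T$ carries three distinct labels and is therefore fully colored. The disjointness is exactly what supplies the factor $n-2$: if a single triangle $t\in T$ were counted both for $z_s$ and for $z_{s'}$ with $s\neq s'$, then its label-triple would lie in $\cov_P(z_s)\cap\cov_P(z_{s'})=\emptyset$, which is impossible. Hence the $n-2$ families of fully colored triangles are pairwise disjoint, and summing the individual lower bounds yields at least $(n-2)|\deg(L,\partial T)|$ fully colored triangles of $T$.

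The heart of the matter is the geometric lemma, which I would prove by a recursive pairing construction on $P$. Group the vertices into $\lfloor n/2\rfloor$ pairs of consecutive vertices $\{y_1,y_2\},\{y_3,y_4\},\dots$; placing a point just inside each of the edges $y_1y_2,y_3y_4,\dots$ gives points whose covering sets are exactly the triangles using that boundary edge, and these are pairwise disjoint because two non-adjacent boundary edges span four distinct vertices and so lie in no common triangle. The remaining triangles are those meeting each pair in at most one vertex; contracting each pair to a single super-vertex produces a convex polygon on $\lceil n/2\rceil$ vertices, on which one recurses, placing the recursive points in a small central region of $P$. Writing $f(n)$ for the number of points so produced, this gives $f(n)=\lfloor n/2\rfloor+f(\lceil n/2\rceil)$ with $f(2)=0$, which solves to $f(n)=n-2$ exactly, as required; the cases $n=5$ and $n=6$ reproduce the pebble configuration of Fig.~6 (two edge points and one central point) and its hexagonal analogue (three edge points and the center).

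I expect the main obstacle to be the geometric verification that the recursively and centrally placed points genuinely have pairwise disjoint covering sets, that is, that no vertex-triangle of $P$ simultaneously contains points from two different levels of the recursion. This is entirely a statement about the fixed convex polygon $P$, with no reference to $T$, $L$, or $M$, so it can be settled once and for all; the most delicate point is controlling the large triangles $(ijk)$ that reach deep into the interior, precisely the place where a point must be inserted carefully enough that such a triangle never captures two of the chosen points. Once the lemma is in place, the passage to polygons with holes requires nothing new, since $\deg(L,\partial T)$ and Theorem~\ref{SpM} already account for all boundary components.
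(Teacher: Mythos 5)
Your overall reduction is exactly the paper's: choose an auxiliary convex $n$-gon $P$, find $n-2$ interior points whose covering sets $\cov_P(\cdot)$ are pairwise disjoint (these are the \emph{pebble sets} of De Loera, Peterson and Su), apply Theorem~2.1 at each point, and use disjointness to prevent double counting. That part of your argument is correct, including the observation that every triangle in a covering set has three distinct vertices, hence produces fully labelled triangles of $T$. The difference is that the paper does not prove the geometric lemma at all: it cites \cite{DeLPS} (see also \cite{pebble2}) for the existence of a pebble set of size $n-2$, whereas you attempt an independent recursive construction. Unfortunately, that construction has a genuine gap.

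The recursion already fails at $n=6$ for a regular hexagon $y_1,\ldots,y_6$ with pairs $\{y_1,y_2\},\{y_3,y_4\},\{y_5,y_6\}$. The union of the three ``fans'' (all triangles containing one of the edges $y_1y_2$, $y_3y_4$, $y_5y_6$) is the \emph{entire} hexagon: the three long diagonals cut it into six sectors $\conv\{O,y_i,y_{i+1}\}$, and one checks $\conv\{O,y_1,y_2\}\subset (125)$ (since $O\in[y_2y_5]$), $\conv\{O,y_2,y_3\}\subset (123)\cup(124)\cup(234)$, with the remaining four sectors handled by rotating these inclusions by $\pm 120^\circ$. Hence \emph{every} candidate fourth point off the diagonals lies in some fan triangle, so its covering set meets that of one of your three edge points; there is no admissible place for the recursive point. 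Your claimed hexagonal configuration ``three edge points and the center'' is also not a pebble set: the center lies on three diagonals, and in fact the eight transversal triangles have empty common interior (already the interiors of $(145)$, $(235)$, $(236)$ have empty intersection, since they force $y<0$, $y>\sqrt{3}x$ and $y>-\sqrt{3}x$ simultaneously). The construction is salvageable only if you exploit the freedom to choose $P$: make each pair an extremely tight cluster (and recurse with the same hierarchical clustering), so that every fan triangle is a thin sliver along a segment between clusters and a point near the centroid of the cluster triangle avoids all of them. As written, though --- for a fixed or regular $P$ --- the key disjointness claim is false, which is precisely the delicate point you flagged but did not resolve; the shortest correct route is the paper's, namely quoting the pebble-set theorem of \cite{DeLPS}.
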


\begin{proof} Consider a set of points $S$ in the interior of a convex $n$-gon $P$ so that the interior
of every triangle determined by three vertices of the polygon contains a
unique point of $S$. In other words, for any two distinct points $x$ and $y$ in $S$ the intersection of the sets $\cov_P(x)$ and $\cov_P(y)$ is empty.  Such sets have been called {\it pebble sets} by De Loera,
Peterson, and Su \cite{DeLPS}. They considered this problem for all dimensions and particularly in dimension two they proved the existence of $S$ with $n-2$ points. Since any triangulation of $P$ consists of $n-2$ triangles this result cannot be improved. 

(In Fig. 6 is shown a pebble set $\{p_1,p_2,p_3\}$ in a pentagon.  For pebble sets in convex polygons see also \cite{pebble2}.)  

Now take a convex $n$-gon $P$ and a pebble set $S=\{p_1,\ldots,p_{n-2}\}$ in $P$ of maximum size. For any $p_i\in S$ from Theorem 2.1 follows that in $T$ there are at least $|\deg(L,\partial T)|$ fully labelled triangles. Since for distinct $p_i$ these  triangles have distinct labelling altogether  we have at least  $(n-2)|\deg(L,\partial T)|$ fully labelled triangles. 
\end{proof}

Now we show that from Theorem 2.1 follows Tucker's and Ky Fan's lemmas. 

  Let $V:=\{v_1,\ldots, v_{2m}\}$, where $v_{i+m}=-v_i, \, 1\le i \le m$, be points in a circle. We  call $L:V\to\{\pm 1,\ldots,\pm n\}, \, n\ge 2,$  an {\it antipodal neighboring labelling} and write $L\in\anl(2m,2n)$  if $L(v_{i+m})=-L(v_i)$ and $L$ is a neighboring labelling, i. e. $L\in \nl(2m,2n)$. 

Note that  $L\in\anl(2m,4)$ if and only if for any two neighbors $u,v\in V$ we have $L(u)\ne -L(v)$. It can be extended for general case.  
If $L\in\anl(2m,2n)$, then there is an antipodal piece-wise linear mapping $f_L:{\Bbb S}^1\to B$, where  $B$ is the boundary of a centrally symmetric convex polygon in the plane with $2n$ vertices   $y_1,\ldots,y_n,-y_1,\ldots,-y_n$.

\begin{lemma} If $L\in\anl(2m,2n)$, then $\deg(L)$ is odd. 
\end{lemma}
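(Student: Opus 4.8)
The plan is to reduce the claim to the winding-number description of $\deg(L)$ obtained in the proof of Lemma 2.1, and then exploit the antipodal symmetry to show that this winding number is an odd multiple of $2n$. First I would pass to the $2n$-label encoding used for the antipodal case (for $n=2$ this is $1\to1,\ 2\to2,\ -1\to3,\ -2\to4$): set $\phi(i):=L(v_i)$ regarded as an element of $\mathbb{Z}/2n\mathbb{Z}$, so that negation of a label corresponds to the shift $k\mapsto k+n\pmod{2n}$. In this language the antipodality condition $L(v_{i+m})=-L(v_i)$ becomes the equivariance relation
$$\phi(i+m)\equiv\phi(i)+n\pmod{2n}.$$
As in Lemma 2.1, write $s_i\in\{-1,0,1\}$ for the cyclic step from $\phi(i)$ to $\phi(i+1)$ and $S_k:=s_1+\dots+s_k$; then $S_{2m}=2n\,\deg(L)$, so it suffices to prove that $S_{2m}/(2n)$ is odd.

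The key observation is that the step $s_i$ depends only on the relative position of $\phi(i)$ and $\phi(i+1)$ in the cycle $\mathbb{Z}/2n\mathbb{Z}$, and is therefore unchanged when both endpoints are translated by the constant $n$. Since the equivariance relation shifts both $\phi(i+m)$ and $\phi(i+1+m)$ by $n$ relative to $\phi(i)$ and $\phi(i+1)$, I get $s_{i+m}=s_i$ for every $i$; the wrap-around index $i=m$ requires a short check, using $-n\equiv n\pmod{2n}$. Hence the second half of the walk contributes the same winding as the first half, and
$$S_{2m}=\sum_{i=1}^{m}s_i+\sum_{j=1}^{m}s_{j+m}=2S_m.$$

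It then remains to analyze $S_m$ modulo $2n$. By telescoping in the $\mathbb{Z}$-lift, $S_m\equiv\phi(m+1)-\phi(1)\pmod{2n}$, and the equivariance relation gives $\phi(m+1)\equiv\phi(1)+n\pmod{2n}$; consequently $S_m\equiv n\pmod{2n}$, i.e.\ $S_m=n(2a+1)$ for some integer $a$. Combining this with $S_{2m}=2S_m$ yields $S_{2m}=2n(2a+1)$, so $\deg(L)=2a+1$ is odd.

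The only delicate point is the bookkeeping of the cyclic (mod $2n$) arithmetic: verifying that the winding genuinely splits as $S_{2m}=2S_m$ at the wrap-around index, and—crucially—that the residue of $S_m$ is $n$ mod $2n$ rather than merely $0$ mod $n$. This last refinement is exactly what forces oddness, and it is where I expect the main care to be needed. I would close by noting that this computation is the combinatorial shadow of the topological fact recorded in Remark 2.1: $f_L$ is an odd (antipodal) self-map of $\mathbb{S}^1$, and every odd self-map of the circle has odd degree.
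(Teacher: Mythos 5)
Your proof is correct, and it takes a genuinely different route from the paper's. The paper also works within the winding-number framework of Lemma 2.1, but it proceeds by an explicit decomposition: starting from $L(v_1)=1$, it marks the alternating occurrences $k_0=1<k_1<\cdots<k_\ell\le m+1$ of the labels $1$ and $-1$ along the first half-period, pairs each segment $S_i$ with its antipodal copy $S_{-i}$ to form a closed loop $R_i$, argues that each $\deg(R_i)=\pm1$, and concludes from the fact that the number $\ell$ of such loops is odd. Your argument replaces all of this bookkeeping with two clean algebraic facts: the translation-invariance of the steps under the shift $k\mapsto k+n$ gives $s_{i+m}=s_i$ (including the wrap-around index, via $-n\equiv n\pmod{2n}$), hence $S_{2m}=2S_m$; and telescoping gives $S_m\equiv\phi(m+1)-\phi(1)\equiv n\pmod{2n}$, so $S_m=n(2a+1)$ and $\deg(L)=S_{2m}/(2n)=2a+1$. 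What your approach buys is economy and robustness: it avoids the alternation construction, the claim $\deg(R_i)=\pm1$ (which itself needs an argument that a path from $1$ to $-1$ avoiding $-1$ internally has displacement exactly $\pm n$), and the implicit step in the paper that the leftover segment from $v_{k_\ell}$ to $v_{m+1}$, when $k_\ell<m+1$, contributes zero winding; it is also visibly the combinatorial version of the standard proof that an odd self-map of $\mathbb{S}^1$ has odd degree. What the paper's approach buys is a finer structural statement — the degree is exhibited as a signed count of an odd number of $\pm1$ contributions tied to sign alternations, which is the shape of statement one wants for Ky Fan--type refinements (Corollary 3.5).
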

\begin{proof} Without loss of generality we may assume that $L(v_1)=1$. So we have $L(v_{m+1})=-1$. Let $k_1$ is the minimum index such that $L(v_{k_1})=-1$.   Then $1<k_1\le m+1$.   If on the interval $[k_1,m+1]$ there is $i: L(v_i)=1$ then denote by $k_2$ the minimum such index.  Let $k_3$ is the minimum index such that $L(v_{k_1})=-1$ and $k_2<k_3\le m+1$. And so on. Finally, we obtain the sequence $k_0=1,k_1,\ldots,k_\ell$, where $1<k_1<\ldots<k_\ell\le m+1$. It is clear that $\ell\ge 1$ and it is odd. 

Consider two sequences: $S_i:=L(v_{k_i}),L(v_{k_i+1}),\ldots,L(v_{k_{i+1}})$, where $i=0,\ldots,\ell-1$, and $S_{-i}:=-S_i=L(v_{k_i+m}),L(v_{k_i+m+1}),\ldots,L(v_{k_{i+1}+m})$. Let $R_i:=S_i,S_{-i}$ is concatenation the two sequences. It is easy to see that $\deg(R_i)=\pm 1$. Since 
$$
\deg(L)=\sum\limits_{i=0}^{\ell-1}{\deg(R_i)}
$$
and $\ell$ is odd, we have that $\deg(L)$ is also odd. 
\end{proof}

 Corollary 3.3 and Lemma 3.1 yield the following extension of Tucker's lemma. 

\begin{cor} Let $T$ be a triangulation of  ${\mathbb B}^2$ that antipodally symmetric on the boundary. Let $L:V(T)\to \{+1,-1,+2,-2\}$ be a labelling  that is antipodal on the boundary. Suppose there are no complementary edges on the boundary. Then there are at least $|\deg(L,\partial T)|$ internal  complementary edges. In particular, there is at least one.
\end{cor}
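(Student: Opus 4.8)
The plan is to reduce the statement to Corollary 3.3 with $n=4$ and then convert the resulting count of fully labelled triangles into a count of internal complementary edges by a double count. First I would pass to the four-labelling picture via the correspondence $+1\to 1$, $+2\to 2$, $-1\to 3$, $-2\to 4$, under which the antipodal relation $L(-v)=-L(v)$ becomes the shift $i\mapsto i+2\pmod 4$ on labels. The hypotheses that $L$ is antipodal on the boundary and has no complementary boundary edges say precisely that the antipodal relation holds and that no two neighboring boundary vertices carry a pair from $\{1,3\}=\{+1,-1\}$ or $\{2,4\}=\{+2,-2\}$; by the remark following the definition of $\anl$ this means the labelling of each boundary circle lies in $\anl(2m,4)$, and in particular $L\in\nl(T,4)$. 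Hence Corollary 3.3 applies and produces at least $(4-2)|\deg(L,\partial T)|=2|\deg(L,\partial T)|$ fully labelled triangles.

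Next I would identify what a fully labelled triangle looks like here. With only four labels available and three vertices per triangle, a fully labelled triangle is one whose vertices carry three distinct labels out of $\{1,2,3,4\}$. Since these four labels split into the complementary pairs $\{+1,-1\}$ and $\{+2,-2\}$, any triple of distinct labels omits a single label and therefore contains both members of exactly one complementary pair. Thus every fully labelled triangle has exactly one complementary edge, and sending each such triangle to that edge defines a map $\phi$ from the set of fully labelled triangles to the set of complementary edges of $T$. Because the boundary carries no complementary edges, every edge in the image of $\phi$ is internal.

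The last step is the double count. In a planar triangulation each internal edge is shared by exactly two triangles, so a fixed internal complementary edge $e$ is $\phi(t)$ for at most two triangles $t$; that is, every fiber of $\phi$ has at most two elements. Writing $F$ for the number of fully labelled triangles and $N$ for the number of internal complementary edges, summation over fibers gives $2|\deg(L,\partial T)|\le F=\sum_{e}|\phi^{-1}(e)|\le 2N$, whence $N\ge|\deg(L,\partial T)|$. For the final assertion I would use that the boundary of $\mathbb{B}^2$ is a single circle, so $\deg(L,\partial T)=\deg(L_0)$ with $L_0\in\anl(2m,4)$; Lemma 3.1 then shows this is odd, hence nonzero, giving $N\ge 1$.

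The one genuinely delicate point is the bookkeeping of the factor two: Corollary 3.3 yields $2|\deg(L,\partial T)|$ triangles while the claim asks for only $|\deg(L,\partial T)|$ edges, and it is exactly the fact that an internal edge borders two triangles---so fibers of $\phi$ have size up to two rather than one---that reconciles the two counts (and is consistent with the examples in Fig.~2 and Fig.~5, where the bound $|\deg(L,\partial T)|$ is attained). I would also verify the harmless claim that a fully labelled triangle cannot have two complementary edges, which would require all four labels on three vertices and is therefore impossible.
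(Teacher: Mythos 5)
Your proof is correct and takes essentially the same route as the paper: the paper derives this corollary precisely from Corollary 3.3 (applied with $n=4$ via the correspondence $+1\to 1$, $+2\to 2$, $-1\to 3$, $-2\to 4$) together with Lemma 3.1, which is exactly your argument. Your explicit double count --- each fully labelled triangle contains exactly one complementary edge, and each internal edge lies in at most two triangles, reconciling the $2|\deg(L,\partial T)|$ triangles with the $|\deg(L,\partial T)|$ edges --- merely fills in bookkeeping that the paper leaves implicit.
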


Now consider an extension of Ky Fan's lemma. 
\begin{cor}
Let $T$ be a triangulation of  ${\mathbb B}^2$ that antipodally symmetric on the boundary. Let $$L:V(T)\to \{+1,-1,+2,-2,\ldots, +n,-n\}$$ be a labelling that is antipodal and NL on the boundary. Suppose this labelling does not have complementary edges. Then there are at least $|\deg(L,\partial T)|$ triangles in  $T$ whose labels are of the form $\{k_0,-k_1,k_2\}$, where $1\le |k_0|<|k_1|<|k_2|\le n$ and all $k_i$ have the same sign. In particular, there is at least one.
\end{cor}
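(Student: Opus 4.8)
The plan is to realize the $2n$ labels as the vertices of a suitable centrally symmetric convex polygon and then apply Theorem~\ref{SpM} at its center $O$, arranging matters so that the triangles of $\cov_B(O)$ are exactly the ones of the required form. Concretely, I place the $2n$ vertices $y_1,\ldots,y_n,-y_1,\ldots,-y_n$ of a centrally symmetric convex $2n$-gon $B$ at angles $0<\theta_1<\cdots<\theta_n<\pi$ and $\theta_1+\pi,\ldots,\theta_n+\pi$, with the correspondence $+k\mapsto y_k$ and $-k\mapsto -y_k$, listed in this cyclic order around $B$. Since $L$ is antipodal and NL on $\partial T$, its boundary restriction $L_0$ lies in $\anl(2m,2n)\subset\nl(2m,2n)$, so $f_L$ is a well-defined simplicial map ${\mathbb B}^2\to{\mathbb R}^2$ of the kind used in Theorem~\ref{SpM}, and $\deg(L,\partial T)=\deg(L_0)$.

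The geometric core is the identification $\cov_B(O)=\{\text{special triangles}\}$, where I call a triangle of $B$ \emph{special} if its labels are $\{k_0,-k_1,k_2\}$ with $1\le|k_0|<|k_1|<|k_2|\le n$ and $k_0,k_1,k_2$ of one sign. A triangle spanned by three vertices of $B$ contains $O$ in its interior exactly when, reading its vertices in angular order, all three angular gaps are strictly less than $\pi$. I would check directly that this forces the triangle to take two vertices from one of the half-families $\{y_i\}$, $\{-y_i\}$ and one from the other, with the indices interlacing; unwinding the two sign patterns $\{+a,-b,+c\}$ with $a<b<c$ and $\{-a,+b,-c\}$ with $a<b<c$ shows these are precisely the special triangles, whereas three vertices taken from a single half-family span an arc shorter than $\pi$ and never enclose $O$. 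In particular no two vertices of a special triangle are antipodal, so the boundary of such a triangle misses $O$.

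Next I would use the hypothesis that $L$ has no complementary edges to make $O$ a regular value of $f_L$. The image under $f_L$ of an edge of $T$ is a segment $[y_i,y_j]$ (or a single point), and such a segment passes through $O$ only when $\{i,j\}=\{+k,-k\}$, i.e. when the edge is complementary. Hence, absent complementary edges, $O$ does not lie on the image of the $1$-skeleton of $T$, so $f_L^{-1}(O)$ is a finite subset of triangle interiors, each lying in a triangle whose label-triangle belongs to $\cov_B(O)$ and is therefore special. Although $O$ itself sits on the diagonals $[y_k,-y_k]$ of $B$, this is harmless precisely because those diagonals are never swept out by the images of $T$-edges; this is the step where the no-complementary-edge assumption does its work, and it is the main point one must handle with care when invoking Theorem~\ref{SpM} at a point that lies on edges of $B$.

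Finally, Theorem~\ref{SpM} — more precisely the constancy of $\deg(L,y)$ established in its proof — gives that the signed number of preimages of $O$ equals $\deg(L,\partial T)$, so the number of special triangles in $T$ is at least $|\deg(L,\partial T)|$. Since $L_0\in\anl(2m,2n)$, Lemma 3.1 shows $\deg(L,\partial T)=\deg(L_0)$ is odd, hence nonzero, which yields the final assertion that at least one special triangle exists. The expected obstacle is the pair of middle steps: correctly matching the purely combinatorial ``alternating signs with strictly increasing absolute values'' condition to the geometric ``encloses the center'' condition, and then justifying that the signed count at the special point $O$ is legitimate despite $O$ lying on the diagonals of $B$.
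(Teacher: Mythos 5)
Your proposal is correct and follows essentially the same route as the paper: the paper's proof likewise takes a centrally symmetric convex $2n$-gon $P$ with vertices $y_1,\ldots,y_n,y_{-1},\ldots,y_{-n}$, applies Theorem 2.1 at its center $O$, observes that $\cov_P(O)$ consists of the diameters $y_iy_{-i}$ together with exactly the triangles of the required alternating form, and uses the absence of complementary edges to discard the diameters. Your write-up is simply more explicit about what the paper leaves implicit --- the interlacing characterization of triangles containing $O$, the fact that the no-complementary-edge hypothesis is what legitimizes evaluating the degree at a point lying on the diagonals of $P$, and the appeal to Lemma 3.1 for the final nonvanishing claim.
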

\begin{proof} Here we apply Theorem 2.1. Let $P$ be a centrally symmetric convex polygon in the plane with $2n$ vertices $y_1,\ldots,y_n,y_{-1},\ldots,y_{-n}$, where $y_{-i}=-y_{i}$. Let $y=O$ is the center of $P$. Then $\cov_P(y)$ consists of edges $y_iy_{-i}, \, i=1,\ldots,n,$ and triangles in the form that is required in the theorem. Since there are no complementary edges we have only triangles. 
\end{proof}

\section{Generalizations of  Sperner's lemma}

Here we consider the main theorem and its corollaries for a very general class of spaces $M$. One of very natural extension of Section 2 is the case when $M$ and $P$ are polytopes in ${\mathbb R}^d$.
In our papers \cite{Mus,MusSpT,MusKKM} we studied a more general case when $M$ is a piece-wise linear  manifold. 
In this case, if $M$ is a compact oriented  manifold with boundary, then from the one side it extends Theorem 2.1 for all dimensions and to a huge class of spaces (even in two dimensions), on the other side, almost all proofs can be easily transfer for this case. 

All results in this section hold for manifolds that admit triangulations. The class of such manifolds  is called  {piece-wise linear (PL)  manifolds.} Note that  a smooth manifold can be triangulated, therefore it is also a PL manifold. However, there are topological manifolds  that do not admit a triangulation. 

A {\em topological manifold} is a topological space that resembles Euclidean space near each point. More precisely, each point of a $d$-dimensional manifold has a neighbourhood that is homeomorphic to the Euclidean space of dimension $d$. A compact manifold without boundary is called {\em closed}. If a manifold contains its own boundary, it is called a {\it manifold with boundary.}

{\it Smooth manifolds} (also called {\it differentiable manifolds}) are manifolds for which overlapping charts ``relate smoothly'' to each other, meaning that the inverse of one followed by the other is an infinitely differentiable map from Euclidean space to itself.

$M$ is called a {\it piece-wise linear (PL) manifold} if it is a topological manifold together with a piecewise linear structure on it. 
Every PL manifold $M$ admits a {\it triangulation:} that is, we can find a collection of simplices $T$ of dimensions $0, 1,\ldots, d$, such that (1) any face of a simplex belonging to $T$ also belongs to $T$, (2) any nonempty intersection of any two simplices of $T$ is a face of each, and (3) the union of the simplices of $T$ is $M$. (See details in \cite{Bryant}.)  Actually, a PL--manifold $M$ can be triangulated by many ways.




An {\it oriented simplex} is a simplex $s$ together with a choice of one of its two possible orientations. In order to specify an orientation for $s$ it suffices to list its vertices in some order since this ordering is a representative of precisely one of the two equivalence classes.  

An {\it oriented} PL--manifold $M$ of dimension $d$ is a triangulation $T$ of $M$ equipped with a partial ordering on its vertices that restricts to an ordering on each $d$-simplex and such that any two $d$-simplices with a common $(d-1)$-face must have the same orientation. Not all manifolds can be oriented. For instance, the M\"obius strip is a non-orientable space. 

Let $T$ be a triangulation of a  $M$.  The vertex set of $T$, denoted by $V(T)$, is the union of the vertex sets of all simplices of $T$. 

Given two triangulations $T_1$ and $T_2$ of two PL manifolds $M_1$ and $M_2$. A {\it simplicial} map is a function $f:V(T_1)\to V(T_2)$ that maps the vertices of $T_1$ to the vertices of $T_2$ and that has the property that for any simplex (face) $s$ of $T_1$, the image set $f(s)$  is a face of $T_2$.   
 
We already discussed $\deg(f)$ in Remarks 2.1 and 2.2. It is well known that the {\it degree of a continuous map} or {\it Brouwer's degree}  is a topological invariant (see, for instance, \cite{Milnor} and \cite[pp. 44--46]{Mat}). 

Let us define $\deg(f)$ more rigorously. Let $T_1$ be a triangulation of a closed  $d$-dimensional oriented PL manifold $M_1$. Suppose that $T_2$ is a triangulation of a  $d$-dimensional oriented PL manifold $M_2$. Let $f:V(T_1)\to V(T_2)$ be a simplicial map. 
Consider any $d$-simplex $s$ of $T_2$ and denote by $\Pi(s)$ the set of preimages of $s$ in $T_1$. Then any $t\in \Pi(s)$ is a $d$--simplex in $T_1$. We have $f(t)=s$ and $H:=f|_t:t\to s$ is a linear map. Then $\det(H)\ne0$ and the sign of this map is well defined. The sum of signs of all  $t\in \Pi(s)$ is $\deg(f)$. It can be proved that this number does not depend on $s$.

Let $P$ be a convex polytope in ${\Bbb R}^d$ with vertices $\{p_1,\ldots,p_n\}$. Let  $T$ be a triangulation of a PL--manifold $M$ of dimension $d$. Let $L$ be an {\it $n$-labelling} of $T$,  $L:V(T)\to\{1,2,\ldots,n\}$. 
If for  $v\in V(T)$ we have $L(v)=i$, then set  $f_{L,P}(v):=p_i$. Therefore, $f_{L,P}$ is defined for all vertices of $T$, and it uniquely defines a simplicial (piecewise linear) map $f_{L,P}:M\to {\Bbb R}^d$.

Let $\partial M$ and $\partial P$ denote the boundary of $M$ and the boundary of $P$ in ${\Bbb R}^d$ respectively. Suppose that $f_{L,P}(\partial M)\subseteq \partial P$. Then we have a mapping  $h:=f_{L,P}|_{\partial M}: \partial M\to \partial P$ and $\deg(h)$ is well defined. We denote it by $\deg(L,\partial T)$.  

The following theorem can be proved by the same argument as Theorem 2.1.

\begin{theorem} \label{SpManifold} Let $P$ be a convex polytope in ${\Bbb R}^d$ with $n$ vertices. 
Let  $T$ be a triangulation of a compact oriented PL--manifold $M$ of dimension $d$. Let $L:V(T)\to\{1,2,\ldots,n\}$ be a labelling such that $f_{L,P}(\partial M)\subseteq \partial P$. Then for any $y\in P$ the triangulation $T$   must contain at least $|\deg(L,\partial T)|$ $d$-simplices that are labelled as simplices in $\cov_P(y)$. 
\end{theorem}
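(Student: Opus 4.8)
The plan is to mimic the proof of Theorem 2.1 almost verbatim, replacing the two-dimensional geometric arguments with the already-established notion of Brouwer degree for simplicial maps between oriented PL manifolds. First I would set up the map $f_{L,P}:M\to{\mathbb R}^d$ exactly as in the paragraph preceding the statement: on each vertex $v$ with $L(v)=i$ put $f_{L,P}(v):=p_i$, and extend affinely (linearly) over each $d$-simplex of $T$. This is well defined because an affine map on a simplex is determined by its values on the vertices, and agreement on shared faces guarantees continuity; so $f_{L,P}$ is a genuine simplicial/piecewise-linear map.

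Next I would fix a generic interior point $y\in P$, generic in the sense that $y$ lies in the interior of some $d$-simplex of the triangulation $S(P)$ of $P$ induced by the vertices $p_1,\ldots,p_n$, i.e. $y$ avoids all the lower-dimensional faces $\conv\{p_{i_0},\ldots,p_{i_j}\}$ with $j<d$. For such $y$ the preimage $f_{L,P}^{-1}(y)$ is a finite set of points, each lying in the interior of a $d$-simplex $t$ of $T$ whose vertex labels span a $d$-simplex of $\cov_P(y)$, and to each such preimage point we attach the sign $\sgn(\det(f_{L,P}|_t))=\pm1$. The signed count of these preimages is, by the definition of degree recalled just above the statement, exactly $\deg(f_{L,P},y)$. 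The number of geometric (unsigned) preimages is at least $|\deg(f_{L,P},y)|$, and every such preimage sits in a simplex labelled as a member of $\cov_P(y)$; this already gives the lower bound, provided I can identify $\deg(f_{L,P},y)$ with $\deg(L,\partial T)$.

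The heart of the proof is therefore the two-part claim: (i) $\deg(f_{L,P},y)$ is independent of the chosen generic $y\in P$, and (ii) this common value equals $\deg(L,\partial T)=\deg(h)$ for the boundary map $h=f_{L,P}|_{\partial M}:\partial M\to\partial P$. Part (i) is the standard fact that the local degree of a continuous map at a regular value is locally constant and, since $P\setminus(\text{codim-}1\text{ skeleton})$ is connected, globally constant on the generic locus; in the PL category this is precisely the cobordism/homotopy argument recalled in Remark 2.2, and one can carry it out combinatorially as in the proof of Theorem 2.1 by tracking how $f_{L,P}^{-1}(pq)$ changes as $y$ crosses a single codimension-one wall and observing that the crossing contributions cancel in pairs. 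Part (ii) is the standard identity $\deg_y(f)=\deg(f|_{\partial M})$ relating the interior degree at any regular value of a map defined on a manifold with boundary to the degree of its restriction to the boundary (valid here because $f_{L,P}(\partial M)\subseteq\partial P$, so $y$ is never hit from the boundary); equivalently, pushing $y$ toward a facet of $P$ makes the interior count agree with the boundary winding, exactly as in the final sentence of the proof of Theorem 2.1 where $p$ is taken close to the edge $y_1y_2$.

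The main obstacle I expect is purely the bookkeeping of orientations and genericity in arbitrary dimension: I must verify that the orientation on $M$ furnished by the partial ordering of vertices makes $\sgn(\det(f_{L,P}|_t))$ well defined and consistent across adjacent simplices, and I must justify that generic $y$ exist and that the degree is genuinely insensitive to the choice — both of which are clean in the smooth setting but require the PL versions quoted from \cite{Milnor} and \cite[pp.~44--46]{Mat}. Since the excerpt explicitly grants that $\deg(f)$ is well defined and independent of the chosen simplex $s$, and since Remark 2.2 asserts that the degree is a homotopy invariant on any space where it is defined, these facts may be cited rather than reproven, leaving the argument essentially identical in structure to Theorem 2.1 with ``triangle'' replaced by ``$d$-simplex'' and ``edge/diagonal'' replaced by ``codimension-one face.''
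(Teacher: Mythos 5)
Your proposal is correct and takes essentially the same route as the paper: the paper's entire ``proof'' of this theorem is the remark that it can be proved by the same argument as Theorem 2.1, and your outline is exactly that argument transplanted to dimension $d$ (the simplicial map $f_{L,P}$, the signed preimage count at a generic interior point $y$, invariance of that count, and its identification with $\deg(L,\partial T)$ by the boundary-degree identity or by pushing $y$ toward a facet). One minor slip does not affect the argument: $P$ minus the codimension-one skeleton spanned by $p_1,\ldots,p_n$ is \emph{not} connected --- it is cut into chambers --- so local constancy alone does not give global constancy; but the wall-crossing cancellation you describe (which is precisely the paper's own diagonal-crossing argument in Theorem 2.1) is what establishes constancy of the degree across chambers, so your part (i) stands on that second justification.
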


Let $P$ be a $d$-simplex in ${\Bbb R}^d$. Take any internal point $y\in P$. It is clear that $\cov(d)=(12\ldots d+1)$.  Therefore Theorem 4.1 yields 

\begin{cor} Let $T$ be a triangulation of a compact oriented PL--manifold $M$ of dimension $d$ with boundary. Then for any labelling $L:V(T)\to \{1,2,\ldots,d+1\}$ the triangulation $T$ must  contain at least $|\deg(L,\partial T)|$ fully colored $d$-simplices. 
\end{cor}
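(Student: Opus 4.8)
The plan is to obtain this as the immediate specialization of Theorem 4.1 in which the polytope $P$ is taken to be a $d$-simplex. Concretely, I would fix $P\subset{\Bbb R}^d$ to be any nondegenerate $d$-simplex with vertices $p_1,\ldots,p_{d+1}$, so that the number of vertices is $n=d+1$. Because a $d$-simplex has exactly $d+1$ vertices, the collection $S(P)$ of $d$-simplices spanned by the vertices of $P$ reduces to the single simplex $(12\ldots d+1)=P$ itself. Consequently, for every interior point $y\in P$ one has $\cov_P(y)=\{(12\ldots d+1)\}$, and a $d$-simplex of $T$ that is ``labelled as a simplex in $\cov_P(y)$'' is precisely a $d$-simplex whose vertices carry all of the labels $1,\ldots,d+1$, i.e.\ a fully colored simplex. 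Thus the conclusion of Theorem 4.1, read in this case, is exactly the assertion of the corollary --- provided the hypotheses of Theorem 4.1 are met.

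The one point that requires checking is the boundary condition $f_{L,P}(\partial M)\subseteq\partial P$, and the key observation is that for a simplex $P$ this holds automatically for every labelling $L$, with no Sperner-type restriction. Indeed, each $(d-1)$-simplex $\sigma$ in the induced triangulation of $\partial M$ has exactly $d$ vertices, so its image $f_{L,P}(\sigma)$ is the convex hull of at most $d$ of the $d+1$ points $p_1,\ldots,p_{d+1}$. The convex hull of any $d$ (or fewer) of the vertices of a $d$-simplex lies in a proper face of $P$, hence in $\partial P$. Taking the union over all boundary $(d-1)$-simplices gives $f_{L,P}(\partial M)\subseteq\partial P$ unconditionally. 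This dimension count is what makes the corollary hold for arbitrary $L$, and I expect it to be the only genuinely substantive step.

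With the hypothesis in hand, the remaining work is bookkeeping about degrees. Since $M$ is compact and oriented, its boundary $\partial M$ is a closed oriented $(d-1)$-manifold, while $\partial P$ is a triangulated $(d-1)$-sphere; hence $h:=f_{L,P}|_{\partial M}\colon\partial M\to\partial P$ has a well-defined Brouwer degree, which is exactly $\deg(L,\partial T)$ in the sense fixed just before the theorem. I would also remark that this degree depends only on the restriction of $L$ to the boundary vertices (through the combinatorial boundary labelling), so the choice of the particular simplex $P$ and of the interior point $y$ is immaterial, matching the two-dimensional Definition 2.2. Applying Theorem 4.1 with this $P$ and any interior $y$ then yields at least $|\deg(L,\partial T)|$ fully colored $d$-simplices, which is the claim.
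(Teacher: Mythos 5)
Your proposal is correct and takes essentially the same route as the paper: specialize Theorem 4.1 to a $d$-simplex $P$, note that $\cov_P(y)=\{(12\ldots d+1)\}$ for any interior point $y$, so that simplices labelled as simplices in $\cov_P(y)$ are exactly the fully colored ones. Your additional dimension-count argument showing that $f_{L,P}(\partial M)\subseteq\partial P$ holds automatically for every labelling (since any $d$ of the $d+1$ vertices of $P$ span a proper face) is a worthwhile detail that the paper leaves implicit, and it is needed both to invoke the theorem and to make $\deg(L,\partial T)$ well defined.
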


Since  for any Sperner's labelling $L$ we have $|\deg(L,\partial T)|=1$, Corollary 4.1 implies Sperner's lemma.

The following corollary extends the De Loera - Petersen - Su theorem \cite{DeLPS}.

\begin{cor} Let $P$ be a convex polytope in ${\Bbb R}^d$ with $n$ vertices. 
Let  $T$ be a triangulation of a compact oriented PL--manifold $M$ of dimension $d$ with boundary. Let $L:V(T)\to\{1,2,\ldots,n\}$ be a labelling such that $f_{L,P}(\partial M)\subseteq \partial P$. Then  $T$ contains  at least $(n-d)|\deg(L,\partial T)|$ fully labelled $d$-simplices.
\end{cor}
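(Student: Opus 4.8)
The plan is to mirror verbatim the proof of Corollary 3.3 (the two-dimensional case), replacing Theorem 2.1 by Theorem 4.1 and the planar pebble set by a pebble set of the $d$-dimensional polytope $P$. Since Theorem 4.1 has already been established for compact oriented PL--manifolds, no manifold-specific work remains; the entire content is combinatorial bookkeeping together with the correct $d$-dimensional input.

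First I would invoke the pebble-set construction of De Loera, Petersen and Su \cite{DeLPS}. Recall that a \emph{pebble set} of $P$ is a finite set $S\subset P$ such that the interior of every $d$-simplex spanned by $d+1$ vertices of $P$ contains exactly one point of $S$; equivalently, $\cov_P(x)\cap\cov_P(y)=\emptyset$ for any two distinct $x,y\in S$, while together the families $\cov_P(x)$, $x\in S$, exhaust $S(P)$. Their theorem provides, for any convex polytope $P$ with $n$ vertices in $\mathbb{R}^d$, a pebble set $S=\{p_1,\ldots,p_{n-d}\}$ of maximum size $n-d$. For the lower bound I only need the two features that $|S|=n-d$ and that the sets $\cov_P(p_i)$ are pairwise disjoint.

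Next, for each pebble $p_i$ I would apply Theorem 4.1 with the point $y=p_i$. This yields at least $|\deg(L,\partial T)|$ $d$-simplices of $T$ whose label set coincides with the vertex set of some simplex $\sigma\in\cov_P(p_i)$. Because every such $\sigma$ is spanned by $d+1$ distinct vertices of $P$, any $d$-simplex of $T$ labelled as $\sigma$ carries $d+1$ distinct labels, i.e.\ it is fully labelled. The key combinatorial step, and the only place the disjointness is used, is to argue that the fully labelled simplices produced for different pebbles are genuinely distinct simplices of $T$: a $d$-simplex $t$ counted for $p_i$ has a fixed label set, namely the vertex set of some $\sigma\in\cov_P(p_i)$, and since $\cov_P(p_i)\cap\cov_P(p_j)=\emptyset$ for $i\ne j$, this label set is not the vertex set of any simplex in $\cov_P(p_j)$, so $t$ is never counted for a different pebble. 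Hence the $n-d$ families of fully labelled simplices, each of cardinality at least $|\deg(L,\partial T)|$, are pairwise disjoint, and summing gives at least $(n-d)\,|\deg(L,\partial T)|$ fully labelled $d$-simplices.

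The hard part is not this argument, which is formally identical to the planar case, but importing the correct higher-dimensional ingredient: one must appeal to the full De Loera--Petersen--Su pebble-set theorem, which guarantees a pebble set of size exactly $n-d$ for an \emph{arbitrary} convex $n$-vertex polytope rather than merely a simplex, and rely on the characterizing property of a pebble set to secure the disjointness $\cov_P(p_i)\cap\cov_P(p_j)=\emptyset$. Once that external input is in hand, the corollary follows immediately from Theorem 4.1.
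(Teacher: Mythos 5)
Your proposal is correct and follows essentially the same route as the paper: invoke the De Loera--Petersen--Su pebble set of size $n-d$ for $P$, apply Theorem 4.1 at each pebble, and use disjointness of the sets $\cov_P(p_i)$ to conclude the counted fully labelled simplices are distinct (this is exactly the argument the paper spells out in two dimensions for Corollary 3.3 and compresses to two lines for Corollary 4.2). One minor caution: the pebble-set property in \cite{DeLPS} is that each $d$-simplex spanned by vertices of $P$ contains \emph{at most} one pebble (not ``exactly one,'' and the families $\cov_P(p_i)$ need not exhaust $S(P)$), but since you explicitly use only $|S|=n-d$ and pairwise disjointness, this does not affect your argument.
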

\begin{proof} In \cite{DeLPS} proved that in $P$ there is a pebble set  of cardinality at least $n-d$. Thus, Theorem 4.1 yields the corollary. 
\end{proof}

Now consider extensions of Tucker's and Ky Fan's lemmas. 

\begin{cor} Let $T$ be a triangulation of  ${\mathbb B}^d$ that antipodally symmetric on the boundary. Let $L:V(T)\to \{+1,-1,\ldots, +d,-d\}$ be a labelling  that is antipodal on the boundary. Suppose there are no complementary edges on the boundary. Then there are at least $|\deg(L,\partial T)|$ internal  complementary edges. In particular, there is at least one.
\end{cor}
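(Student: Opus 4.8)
The plan is to realize the $\pm$-labelling as a piecewise linear map into a cross-polytope and then apply Theorem 4.1 at its center. Set $P$ to be the centrally symmetric cross-polytope in ${\mathbb R}^d$ with the $2d$ vertices $\pm e_1,\ldots,\pm e_d$, and encode the labelling by sending the label $+i$ to $e_i$ and $-i$ to $-e_i$; this is the $d$-dimensional counterpart of the correspondence used in Corollary 3.4. This produces a simplicial map $f_{L,P}:{\mathbb B}^d\to{\mathbb R}^d$. First I would verify the hypothesis $f_{L,P}(\partial{\mathbb B}^d)\subseteq\partial P$ of Theorem 4.1: the assumption that the boundary carries no complementary edge means that the labels on any boundary $(d-1)$-simplex use at most one vertex from each antipodal pair $\{e_i,-e_i\}$, and any such collection spans a face of the cross-polytope. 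Hence the boundary map $h:=f_{L,P}|_{\partial{\mathbb B}^d}:{\mathbb S}^{d-1}\to\partial P$ is well defined and $\deg(L,\partial T)=\deg(h)$ exists.

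Next I would record that $h$ is an antipodal (odd) map between two $(d-1)$-spheres, since $L$ is antipodal on the boundary and $P$ is centrally symmetric. By Borsuk's theorem on odd maps, which is the higher-dimensional form of Lemma 3.1, its degree is odd, hence nonzero. This already yields the final assertion that at least one internal complementary edge exists and reduces the corollary to the quantitative count.

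For the count I would apply Theorem 4.1 with $y=O$, the center of $P$, and identify $\cov_P(O)$. A $d$-simplex spanned by $d+1$ of the vertices $\pm e_i$ must, by pigeonhole among the $d$ antipodal pairs, contain some pair $\{e_k,-e_k\}$; consequently $O=\tfrac12 e_k+\tfrac12(-e_k)$ lies on the diagonal $[e_k,-e_k]$, which is a proper $1$-face of that simplex. Thus $O$ never lies in the interior of a $d$-simplex of $P$, and the only members of $\cov_P(O)$ through $O$ are the $d$ diagonals $[e_k,-e_k]$, $k=1,\ldots,d$, which are precisely the images of the complementary edges of $T$. So the signed preimage count $\deg(L,\partial T)$ of $O$ distributes over the complementary edges of $T$, each contributing a single preimage point (the point mapping to $O$) together with a sign; since the boundary carries none, every such edge is internal. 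Because signs can only reduce the total, the number of internal complementary edges is at least $|\deg(L,\partial T)|$.

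The main obstacle is that $O$ is a degenerate, non-regular value: it sits on the diagonals rather than in the interior of any image simplex, so the clean preimage picture in the proof of Theorem 2.1 does not apply verbatim. I would resolve this exactly as in the proofs of Theorem 2.1 and Corollary 3.5, by displacing $O$ to a nearby regular value $y_\varepsilon$, for which $\deg(L,\partial T)$ is the honest signed number of preimages lying in interiors of $d$-simplices, and then letting $y_\varepsilon\to O$; each such preimage migrates onto the complementary edge of its incident simplex, and the local sign-cancellation analysis already performed for diagonal crossings in Theorem 2.1 keeps the bookkeeping consistent. Checking that distinct preimages can only collapse onto, never spuriously multiply, the set of complementary edges is the delicate point, but it only strengthens the inequality.
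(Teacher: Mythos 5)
Your setup follows the paper's own proof of this corollary almost exactly: take $P$ to be the crosspolytope, check that the absence of boundary complementary edges forces $f_{L,P}(\partial{\mathbb B}^d)\subseteq\partial P$, note that the boundary degree is odd (the paper merely asserts this; your appeal to the odd-map theorem of Borsuk is a legitimate way to fill it in), and apply Theorem 4.1 at the center $y=O$, handling the fact that $O$ is a degenerate value by perturbation. The genuine gap is in your final bookkeeping step, where you claim that each complementary edge ``contributes a single preimage point together with a sign,'' and that the collapsing of distinct preimages onto complementary edges ``only strengthens the inequality.'' This is backwards. What Theorem 4.1 (equivalently, the perturbation argument) actually yields is at least $|\deg(L,\partial T)|$ distinct $d$-simplices of $T$ whose label sets contain a complementary pair; distinct such simplices can share one and the same complementary edge, so collapsing weakens, not strengthens, the count of \emph{edges}. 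In dimension $d=2$ the step can be repaired: the two triangles adjacent to a complementary edge either map to the two sides of the diagonal or fold onto one side with cancelling signs, so each edge absorbs local index at most $1$ in absolute value. But for $d\ge 3$ the local index of $f_{L,P}$ at the midpoint of a complementary edge equals the degree of the induced map on the link of that edge, which is a $(d-2)$-sphere, and that degree can be any integer; several preimages of the perturbed value, all of the same sign, can migrate onto a single complementary edge.

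Concretely, for $d=3$ let the ball be the bicone over a $12$-gon: vertices $N,S,w_1,\ldots,w_{12}$, triangulated by the twelve tetrahedra $NSw_iw_{i+1}$. Put $L(N)=+3$, $L(S)=-3$, and let $L(w_i)$ follow the period-four pattern $+1,+2,-1,-2,\ldots$ around the equator. Since $-w_i=w_{i+6}$ and a shift by two steps in the pattern is negation, this labelling is antipodal on the boundary; it has no boundary complementary edges; and $\deg(L,\partial T)=\pm3$, because the equator winds three times around the equatorial square of the octahedron and the boundary map is the suspension of that winding. Yet the only complementary edge in the entire triangulation is $NS$, whose midpoint is an isolated preimage of $O$ of local index $\pm3$. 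So your bookkeeping claim fails, and in fact the quantitative statement itself fails for $d\ge3$: what survives is the qualitative conclusion (at least one internal complementary edge, from oddness of the degree) and the count of $d$-simplices \emph{containing} a complementary edge, which is what Theorem 4.1 really provides (twelve of them in this example). To be fair, the paper's own three-line proof silently makes the same simplex-to-edge conflation, so your attempt is no weaker than the original; but the point you flagged as merely ``delicate'' is exactly where the argument — and the statement — breaks down in dimensions three and higher.
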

\begin{proof}
Let $P$ be a centrally symmetric crosspolytope in  ${\Bbb R}^d$. If there are no complementary edges on the boundary $\partial T$, then  $f_{L,P}(\partial M)\subseteq \partial P$. Therefore, 	 $\deg(L,\partial T)$ is well defined and it is old. Let $0\in P$ be the center of $P$. The corollary follows from  Theorem 4.1 if we set $y=0$.
\end{proof}

Let $P$ be a convex polytope in $\mathbb{R}^d$ with $2n$ centrally symmetric vertices $\{p_1,-p_1,\ldots,p_n,-p_n\}$.  We say that $P$  is {ACS (Alternating Centrally Symmetric)} $(n,d)$-polytope if the set of all simplices in $\cov_P(0)$, that contain the origin $0$ of $\mathbb{R}^d$ inside,  consists of  edges $(p_i,-p_i)$ and  $d$-simplices with vertices \{$p_{k_0},-p_{k_1},\ldots,(-1)^dp_{k_d}$\} and \{$-p_{k_0},p_{k_1},\ldots,(-1)^{d+1}p_{k_d}$\}, where $1\le k_0<k_1<\ldots<k_d\le n$.  In \cite[Theorem 5.2]{MusSpT} we proved that for any integer $d\ge 2$ and $n\ge d$ there exists ACS $(n,d)$-polytope.

\begin{cor} Let $P$ be an ACS $(n,d)$-polytope. 
Let $T$ be a triangulation of  ${\mathbb B}^d$ that antipodally symmetric on the boundary. Let $$L:V(T)\to \{+1,-1,+2,-2,\ldots, +n,-n\}$$ be a labelling that is antipodal on the boundary and $f_{L,P}(\partial T)\subseteq \partial P$. Suppose this labelling does not have complementary edges. Then there are at least $|\deg(L,\partial T)|$ simplices in  $T$ whose labels are of the form $\{k_0,-k_1,\ldots, (-1)^dk_d\}$, where $1\le |k_0|<|k_1|<\ldots<|k_d|\le n$ and all $k_i$ have the same sign. In particular, there is at least one.
\end{cor}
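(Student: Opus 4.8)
The plan is to apply Theorem 4.1 with the distinguished point $y=0$, mirroring the proof of Corollary 3.5 but using the ACS structure of $P$ in place of the two-dimensional centrally symmetric polygon. First I would fix the correspondence between the label set $\{+1,-1,\ldots,+n,-n\}$ and the vertex set $\{p_1,-p_1,\ldots,p_n,-p_n\}$ of $P$ by sending the label $+i$ to $p_i$ and $-i$ to $-p_i$; this turns $L$ into a labelling to which $f_{L,P}$ and Theorem 4.1 apply verbatim. Because $L$ is antipodal on the boundary and $P$ is centrally symmetric, the hypothesis $f_{L,P}(\partial T)\subseteq\partial P$ makes the restriction $h:=f_{L,P}|_{\partial\mathbb{B}^d}:\mathbb{S}^{d-1}\to\partial P$ an odd (antipodal) map, so $\deg(L,\partial T)=\deg(h)$ is well defined.

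Next I would read off $\cov_P(0)$ from the defining property of an ACS $(n,d)$-polytope: it consists of the edges $(p_i,-p_i)$ together with the two families of $d$-simplices $\{p_{k_0},-p_{k_1},\ldots,(-1)^dp_{k_d}\}$ and $\{-p_{k_0},p_{k_1},\ldots,(-1)^{d+1}p_{k_d}\}$ with $1\le k_0<\cdots<k_d\le n$. Theorem 4.1 with $y=0$ then produces at least $|\deg(L,\partial T)|$ $d$-simplices of $T$ labelled as members of $\cov_P(0)$. Translating back through the label--vertex correspondence, the first family gives labels $\{k_0,-k_1,\ldots,(-1)^dk_d\}$ with all $k_i>0$ and the second gives the same pattern with all $k_i<0$; in both cases $1\le|k_0|<\cdots<|k_d|\le n$ and all $k_i$ share a sign, which is precisely the required conclusion.

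The step I expect to be the main obstacle is justifying that none of the simplices counted by Theorem 4.1 is one of the degenerate edge-members $(p_i,-p_i)$ of $\cov_P(0)$. The point $y=0$ is non-generic, since it lies on each edge $(p_i,-p_i)$, whereas the degree computation underlying Theorem 4.1 is cleanest for $y$ in general position, with $f_{L,P}^{-1}(y)$ meeting only the interiors of $d$-simplices. This is exactly where the no-complementary-edges hypothesis enters: it says that no edge of $T$ carries opposite labels $i,-i$, hence $f_{L,P}$ maps no edge of $T$ onto a segment through $0$, so $0$ lies on the image of no face of $T$ of dimension $<d$. Consequently every point of $f_{L,P}^{-1}(0)$ lies in the interior of a $d$-simplex of $T$ whose image is one of the full $d$-simplices of $\cov_P(0)$, and the signed count of these equals $\deg(L,\partial T)$. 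Finally, since an antipodal self-map of $\mathbb{S}^{d-1}$ has odd degree (a standard consequence of Borsuk--Ulam, the higher-dimensional analog of Lemma 3.1), $\deg(L,\partial T)$ is odd and in particular nonzero, which yields the ``at least one'' assertion.
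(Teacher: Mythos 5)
Your proposal is correct and follows essentially the same route as the paper: apply Theorem 4.1 at the center $y=0$, identify $\cov_P(0)$ via the ACS property, and use the absence of complementary edges to rule out simplices of $T$ labelled as the antipodal edges $(p_i,-p_i)$, leaving only $d$-simplices of the alternating form. Your added details --- the Carath\'eodory-type justification that $0$ avoids images of faces of dimension less than $d$, and the oddness of the degree of an antipodal boundary map to get ``at least one'' --- merely make explicit what the paper leaves implicit (it states the oddness only in the proof of Corollary 4.3).
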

\begin{proof} Here we also apply Theorem 4.1. Let $y=O$ is the center of $P$. Then $\cov_P(y)$ consists of edges $y_iy_{-i}, \, i=1,\ldots,n,$ and simplices in the form that is required in the theorem. Since there are no complementary edges we have only $d$-simplices. 
\end{proof}

Theorem 4.1 and its corollaries can be also extended for the non-orientable case. This extension is based on the concept of the degree of a continuous mapping modulo 2. Let $f:M_1\to M_2$ be a continuous map between two closed manifolds $M_1$ and $M_2$ of the same dimension. The degree  is a number that represents the number of times that the domain manifold wraps around the range manifold under the mapping. Then $\dg2(f)$ (the degree modulo 2) is 1 if this number is odd and 0 otherwise. It is well known that $\dg2(f)$ of a continuous map $f$  is a topological invariant modulo 2. 

\begin{theorem} \label{SpManifold} Let $P$ be a convex polytope in ${\Bbb R}^d$ with $n$ vertices. 
Let  $T$ be a triangulation of a PL--manifold $M$ of dimension $d$ with boundary. Let $L:V(T)\to\{1,2,\ldots,n\}$ be a labelling such that $f_{L,P}(\partial M)\subseteq \partial P$. Suppose $\dg2(L,\partial T)\ne 0$. Then for any $y\in P$ the triangulation $T$   must contain a simplex that is labelled as some simplex in $\cov_P(y)$. 
\end{theorem}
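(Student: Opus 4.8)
The plan is to run the proof of Theorem 4.1 almost verbatim, replacing the \emph{signed} preimage count by a count \emph{modulo} $2$; this substitution is forced on us because on a non-orientable $M$ the local signs $\sgn(x_k)$ used in Theorem 2.1 cannot be chosen coherently, so only their parity survives. As before, $L$ extends to a simplicial (piecewise linear) map $f_{L,P}:M\to{\Bbb R}^d$ sending each vertex $v$ with $L(v)=i$ to $p_i$ and affine on every $d$-simplex of $T$; by hypothesis $f_{L,P}(\partial M)\subseteq\partial P$. First I would fix a point $y$ in general position in the interior of $P$, meaning that $y$ lies on the affine hull of no $d$ of the vertices $p_1,\dots,p_n$ and in the image of no $(d-1)$-face of $T$. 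Then $f_{L,P}^{-1}(y)$ is a finite set $\{x_1,\dots,x_r\}$, each $x_k$ lying in the interior of a $d$-simplex $t_k\in T$ on which $f_{L,P}$ is a nondegenerate affine map; since $f_{L,P}(t_k)\ni y$, the image $d$-simplex covers $y$, so $t_k$ is labelled as a simplex of $\cov_P(y)$. I then set $\dg2(L,y):=r\bmod 2$.

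The heart of the argument is to show that $\dg2(L,y)$ is independent of the generic point $y$ and equals $\dg2(L,\partial T)$. For independence I would join two generic interior points $p,q$ by a generic segment $[pq]\subset\operatorname{int}P$; its preimage $f_{L,P}^{-1}([pq])$ is a compact $1$-manifold whose boundary is exactly $f_{L,P}^{-1}(p)\cup f_{L,P}^{-1}(q)$. There are no stray boundary points lying over $\partial M$, because $f_{L,P}(\partial M)\subseteq\partial P$ while $[pq]$ avoids $\partial P$. Since a compact $1$-manifold has an even number of endpoints, $|f_{L,P}^{-1}(p)|\equiv|f_{L,P}^{-1}(q)|\pmod 2$, so $\dg2(L,\cdot)$ is constant on generic interior points. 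Pushing $y$ towards a facet of $P$ through a collar then identifies this common value with the mod-$2$ degree of the boundary map $h:=f_{L,P}|_{\partial M}:\partial M\to\partial P$, that is, with $\dg2(L,\partial T)=\dg2(h)$; this is precisely the mod-$2$ analogue of the last line of the proof of Theorem 2.1, where a point near the edge $y_1y_2$ was used.

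Assuming now $\dg2(L,\partial T)\ne 0$ forces $\dg2(L,y)=1$, hence $r$ is odd and in particular positive, so at least one $t_k$ exists and is labelled as a simplex of $\cov_P(y)$, which is the claim for generic $y$. For a non-generic $y$ I would choose a generic $y'$ close to $y$: the image simplex $f_{L,P}(t)$ covering a preimage of $y'$ is closed, so $y\in\overline{f_{L,P}(t)}=f_{L,P}(t)$, and therefore $t$ is still labelled as a simplex of $\cov_P(y)$. This disposes of all $y\in P$.

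I expect the main obstacle to lie in the two places where non-orientability genuinely bites. The first is justifying that, for the PL map $f_{L,P}$, the preimage of a generic interior arc is an honest compact $1$-manifold with boundary only over the endpoints: this is a general-position (transversality) statement for simplicial maps that must be made relative to the boundary condition $f_{L,P}(\partial M)\subseteq\partial P$, and without it the parity bookkeeping collapses. The second is the correct identification of the interior parity $\dg2(L,y)$ with the mod-$2$ degree $\dg2(h)$ of the boundary map. Both are standard facts about Brouwer's degree modulo $2$, but here they have to be invoked in the PL, manifold-with-boundary setting rather than merely quoted, and keeping track of the fact that the conclusion is now only ``at least one'' (no exact count $|\deg|$ survives, since mod-$2$ degree detects only parity) is the price paid for dropping orientability.
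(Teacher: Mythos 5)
Your proposal is correct and takes essentially the same route as the paper: the paper gives no separate proof of this theorem, stating only that Theorem 4.1 (itself proved "by the same argument as Theorem 2.1") extends to the non-orientable case via the degree modulo 2, and your argument is exactly that --- build the simplicial map $f_{L,P}$, count preimages of a generic interior point mod 2, prove invariance via $1$-manifold preimages of generic segments (the parity analogue of the sign-cancellation step in Theorem 2.1), and identify the result with $\dg2(L,\partial T)$ by pushing $y$ to a facet, just as the paper pushes $p$ toward the edge $y_1y_2$. Your handling of non-generic $y$ by perturbation and your explicit flagging of the PL transversality details are, if anything, more careful than the paper's own exposition.
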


\begin{cor} Let $T$ be a triangulation of a compact PL--manifold $M$ of dimension $d$ with boundary. Then any labelling $L:V(T)\to \{1,2,\ldots,d+1\}$ with  $\dg2(L,\partial T)\ne 0$ the triangulation $T$ must  contain at least one fully colored simplex. 
\end{cor}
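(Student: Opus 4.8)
The plan is to derive this corollary from Theorem 4.2 in exactly the same way that Corollary 4.1 is obtained from Theorem 4.1: by specializing the target polytope $P$ to a $d$-simplex. So I would take $P$ to be a $d$-simplex in $\mathbb{R}^d$ with vertices $p_1,\ldots,p_{d+1}$, so that $n=d+1$ and the given labelling $L:V(T)\to\{1,2,\ldots,d+1\}$ is exactly an $n$-labelling in the sense required by Theorem 4.2.

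Before invoking the theorem I must check its two structural hypotheses. First, the boundary condition $f_{L,P}(\partial M)\subseteq\partial P$ holds automatically in this setting: every $(d-1)$-simplex of $\partial T$ has only $d$ vertices, so under $f_{L,P}$ it is carried into the convex hull of at most $d$ of the $d+1$ vertices of $P$, which is a proper face of the simplex $P$ and hence lies in $\partial P$. Consequently $\dg2(L,\partial T)$ is well defined, and its nonvanishing is precisely the remaining hypothesis, which is assumed. Second, I would identify $\cov_P(y)$ for an interior point $y\in P$: since $P$ is a simplex, the only subsimplex spanned by vertices of $P$ that contains an interior point is $P$ itself, so $\cov_P(y)$ consists of the single fully labelled simplex $(1\,2\,\ldots\,d{+}1)$.

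With these verifications in place, Theorem 4.2, applied with this choice of $P$ and any interior $y$, guarantees a $d$-simplex of $T$ labelled as some simplex of $\cov_P(y)$; but $\cov_P(y)$ contains only the full simplex, so this simplex must be fully colored, which is the desired conclusion. I do not expect any genuine obstacle here: the entire topological content is carried by Theorem 4.2, and the only two points needing care are the automatic verification of $f_{L,P}(\partial M)\subseteq\partial P$ (which rests on the vertex count of a $(d-1)$-simplex relative to the $d+1$ labels) and the elementary observation that a simplex has a one-element cover at each interior point. The passage to the non-orientable case costs nothing extra, since $\dg2$ replaces $\deg$ throughout and is exactly the invariant that Theorem 4.2 is built on.
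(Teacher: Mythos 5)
Your proposal is correct and is exactly the derivation the paper intends: the corollary is stated right after Theorem 4.2 with no separate proof because it follows by specializing $P$ to a $d$-simplex and noting $\cov_P(y)=(1\,2\ldots d{+}1)$ for interior $y$, precisely as Corollary 4.1 is obtained from Theorem 4.1. Your extra check that $f_{L,P}(\partial M)\subseteq\partial P$ holds automatically (a $(d-1)$-simplex of $\partial T$ has only $d$ vertices, so its image lies in a proper face of $P$) is a point the paper leaves implicit, and it is verified correctly.
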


\medskip
  
\medskip

\noindent{\bf Acknowledgment.} I  wish to thank Arseniy Akopyan and Roman Karasev  for helpful discussions and  comments.

\medskip


 \medskip

\noindent O. R. Musin\\ 
Department of Mathematics, University of Texas at Brownsville, One West University Boulevard, Brownsville, TX, 78520 \\
and\\
IITP RAS, Bolshoy Karetny per. 19, Moscow, 127994, Russia\\ 
{\it E-mail address:} oleg.musin@utb.edu

\end{document}